\def\sss{\smallsetminus}
\newcommand*{\lon}{
       \mskip1mu
        \relax
        {;}
        \mskip1mu
        \relax
}
\newcommand\fullv[1]{#1}
\newcommand\confv[1]{}
\def\Real{\mathbf{R}} 
\def\Par{\mathcal{P}} 
\def\cihy{\mathcal{C}_o} 
\def\Mat{M} 
\def\field{\mathbf{F}}
\def\kos{K}
\def\kosl{L}
\def\cip{\texttt{CI}}
\def\gep{\texttt{GE}}
\def\cep{\texttt{CE}}
\def\imp{\texttt{IM}}
\def\akp{\texttt{AK}}
\def\clp{\texttt{CL}} 
\def\vamos{V} 
\def\vamosp{V_o} 
\def\pone{P_{8,1}}
\def\ptwo{P'_{8,2}}
\def\ptwop{P''_{8,2}}
\def\pthree{P_{8,3}}
\def\ttt{T^3} 
\def\tto{T^3_1} 
\def\ttw{T^3_2} 
\def\ttc{T^m_1} 
\def\twc{T^m_2} 
\def\sss{\smallsetminus}
\def\grse{E} 
\theoremstyle{plain}
\newtheorem{proposition}{Proposition}[section]
\newtheorem{lemma}[proposition]{Lemma}
\theoremstyle{definition}
\newtheorem{definition}[proposition]{Definition}
\title{A Note on Extension Properties and Representations of Matroids}
\author[1]{Michael Bamiloshin}
\affil[1]{Universitat Rovira i Virgili, Tarragona, Spain}
\author[1]{Oriol Farr\`as}
\author[2]{Carles Padr\'o}
\affil[2]{Universitat Polit\`ecnica de Catalunya, Barcelona, Spain\\
\texttt{\small{michaelolugbenga.bamiloshin@urv.cat, oriol.farras@urv.cat, carles.padro@upc.edu}}}
\begin{document}
\maketitle

\begin{abstract}
We discuss several extension properties 
of matroids and polymatroids and 
their application as necessary conditions
for the existence of different matroid representations,
namely linear, folded linear, algebraic, and entropic representations.
Iterations of those extension properties are 
checked for  matroids on eight and nine elements
by means of computer-aided explorations,
finding in that way several new examples
of non-linearly representable matroids.
A special emphasis is made on sparse paving matroids
on nine points containing the tic-tac-toe configuration.
We present a new, more clear description of
that family and we analyze
extension properties on those matroids and their duals. 
 
 \
 
 \textbf{Keywords}: Matroid representation, Common information, Generalized Euclidean property, Secret sharing schemes. 
\end{abstract}

\makeatletter{\renewcommand*{\@makefnmark}{}
	\footnotetext{Michael Bamiloshin received funding from the European Union's Horizon 2020 research and innovation programme under the Marie Sk\l{}odowska-Curie grant agreement No. 713679 and from the Universitat Rovira i Virgili.  Oriol Farr\`as is supported by grant 2021 SGR 00115 from the Government of Catalonia and by then project HERMES, funded by INCIBE and by the European Union NextGeneration EU/PRTR. Carles Padr\'o is supported by the Spanish Government through grant PID2019-109379RB-I00. Additionally, the authors are supported by the project ACITHEC PID2021-124928NB-I00, funded by MCIN/AEI/10.13039/ 501100011033/FEDER, EU. }\makeatother}

\section{Introduction}
\label{sec:introd}


Given a linear representation of a matroid $\Mat$,
every flat determines a vector subspace.
Consider a non-modular pair $(F_1,F_2)$ of flats of $\Mat$, 
that is, the rank of $F_1 \cap F_2$ is less than the dimension
of the intersection of the corresponding subspaces.
Then the rank of $F_1 \cap F_2$
is increased in some single-element extension of $\Mat$
that is still linearly representable. 
A necessary condition for a matroid
to be linearly representable
is derived from that fact, namely the
\emph{generalized Euclidean intersection property} discussed in~\cite{BaWa89}.
In particular, it is not satisfied by the \emph{V\'amos matroid}.

A necessary condition for a matroid
to be algebraic is given by the
Ingleton-Main lemma~\cite{InMa75}.
It states that, given three non-coplanar lines 
in an algebraic matroid such that 
every two of them are coplanar, there
is a single-element extension in which 
the three lines meet in one point and, 
moreover, the extension is algebraic.
More general versions of that property 
were presented by Lindstr\"om~\cite{Lindstrom1988}
and Dress and Lov\'asz~\cite{DrLo87}.
The Ingleton-Main lemma provided the first example of a
non-algebraic matroid~\cite{InMa75}, namely the V\'amos matroid.
Lindstr\"om~\cite{Lindstrom1988} used a generalization of it
to prove that the class of algebraic matroids 
has infinitely many excluded minors.
Hochst\"attler~\cite{Hoc97} proved that 
the dual of the \emph{tic-tac-toe matroid}
is not algebraic by using the
Ingleton-Main lemma.
That could be a counterexample
proving that the class of algebraic matroids
is not closed by duality, but this is still an open problem. 

Using the terminology introduced in~\cite{Bollen18},
those necessary conditions for a matroid to be 
linear or algebraic are examples
of \emph{extension properties} of matroids. 
Each of them consists of a matroid extension
with certain constraints.
Every matroid in the class of interest
admits extensions with the required properties
that are in the same class.

Extension properties of
polymatroids have been used in information theory.
Specifically, the Ahlswede-K\"orner lemma~\cite{AhKo77,AhKo06,Kac13}
and the copy lemma~\cite{DFZ06,Kac13,ZhYe98}
determine extension properties of 
\emph{almost entropic polymatroids}. These properties
have been applied in the search
for \emph{non-Shannon linear information
inequalities}~\cite{DFZ06,DFZ11,HRSV00,Kac13,Mat07-3},
with the \emph{Zhang-Yeung inequality}~\cite{ZhYe98}
being the first one to be found.
\emph{Non-Shannon linear rank inequalities}
as, for example, the \emph{Ingleton inequality}~\cite{Ing71}
are useful when dealing with discrete 
\emph{linear} random variables. 
Almost all known such inequalities 
follow from  the \emph{common information property}~\cite{DFZ09},
an extension property of linearly representable polymatroids.

Linear information and rank inequalities
have been used as constraints in
linear programs providing bounds 
on the information ratio of
\emph{secret sharing schemes}~\cite{BLP08,BeOr11,PVY13,MPY16}
and on the achievable rates in \emph{network coding}~\cite{DFZ07,TCG17,Yeu08}. 
Some of those bounds have been recently
improved by using the aforementioned
extension properties of polymatroids,
from which information and rank 
inequalities are derived~\cite{FKMP20,GuRo19,BBFP20}.
The idea is to substitute those inequalities  
in the linear programs by constraints
that are derived from extension properties.
In that way, it is not neccessary to guess what
inequalities fit best with a given problem. 
Among other results, those improved bounds made it possible to
determine the optimal value of the information ratio
of \emph{linear} secret sharing schemes
for all access structures on five players
and all graph-based access structures on six players~\cite{FKMP20},
partially concluding the projects initiated in~\cite{Dij97,JaMa96}.

Frobenius flocks, 
which have been introduced by Bollen~\cite{Bollen18},
provide a remarkable new tool to find out the existence of algebraic
representations of matroids.
Exhaustive searches on small matroids
have been carried out in~\cite{Bollen18}
by using Frobenius flocks as well as
Ingleton-Main and Dress-Lov\'asz extension properties.

Building on the results by Mayhew and Royle~\cite{MaRo08}
and their online database of matroids~\cite{RoMaDatabase}
and using, among other tools, 
common information and Ahslwede-K\"orner
extension properties,
the classification of matroids 
on small ground sets according to their representations
was pursued in~\cite{BBFP20}.
In addition to linear and algebraic, 
almost entropic and \emph{folded linear} representations
were considered. 
Those two classes of matroids 
play an important role in 
the theory of secret sharing schemes
because of the Brickell-Davenport theorem~\cite{BrDa91}.
Moreover, the interest of the former is
increased by a recent result by Mat\'u\v{s}~\cite{Matus24},
namely, algebraic matroids are almost entropic.
Therefore, extension properties of 
almost entropic polymatroids
apply to algebraic matroids too.

In contrast to previous 
works~\cite{InMa75,Lindstrom1988,DrLo87,BaWa89,Hoc97},
more recent applications of 
extension properties~\cite{Bollen18,FKMP20,GuRo19,BBFP20}
have been obtained by computer-aided explorations 
in which several iterations
of the chosen extension are searched.
On the negative side, due to their computational complexity, 
those explorations are only feasible for
matroids and polymatroids on small ground sets. 

The classification of matroids on eight elements
is almost concluded by the results 
in~\cite{MaRo08,Bollen18,BBFP20}.
Only for three of them 
it is not known whether they are  
algebraic, almost entropic, or neither.
There are exactly 39 
matroids on eight elements that do not 
satisfy the Ingleton inequality~\cite{MaRo08}.
They are \emph{sparse paving} matroids 
and each of them contains five \emph{circuit-hyperplanes} 
in the same configuration as the ones of the V\'amos matroid.
Because of that, those matroids 
do not satisfy the Ahlswede-K\"orner property,
and hence they are neither algebraic nor almost entropic.
Moreover, all of them are relaxations of 
the \emph{maximal} sparse paving matroid $AG(3,2)$.

One of the main open problems about
the classification of matroids on nine elements
is to determine whether the 
tic-tac-toe matroid $\ttt$ is algebraic or not.
Its dual matroid does not satisfy the 
Ingleton-Main extension property~\cite{Hoc97}, and hence it is not algebraic.
Therefore, $\ttt$ is a candidate for a  counterexample
proving that the class of algebraic matroids 
is not closed by duality.

\subsection{Our Results}


In this work, we pursue the application of
extension properties to the classification of 
small matroids according to their representations.
 We provide a unified view of extension properties 
previously studied in the area of Matroid Theory 
(i.e., Euclidean, generalized Euclidean, 
Levi's intersection, and Ingleton-Main) 
with others studied in the area of Information Theory 
(common information and Ahlswede-K\"orner) 
that help us combine theoretical results and
computer-aided tools for the classification
of representable matroids. 
We discover a new family of matroids connected to 
the tic-tac-toe matroid that are not representable. 
We mainly focus on sparse paving matroids on 
eight and nine elements.

In the first stage of this work, we carried out
computer-aided explorations
on the databases of matroids
by Royle and Mayhew~\cite{RoMaDatabase} and
Bollen~\cite{Bollen18-1}.
Using linear programs in a blanket approach to
check the existence of iterated extension properties
for matroids on nine points
appeared to be too computationally costly.
Instead, we checked the feasibility of 
iterated generalized Euclidean extensions
by exploring the existence of the
associated modular cuts~\cite[Section~7.2]{Oxley11}. Once impossible iterated generalized Euclidean extensions are found,
one can check by linear programming
other extension properties
on those particular situations,
like common information and 
Ahlswede-K\"orner. 
That strategy provided several 
new examples of non-linear matroids,
which are listed in the \fullv{Appendix}\confv{extended 
version of this paper~\cite{BFP23}}.
The computer programs 
are available in~\cite{Bam22}.


The outcome of those explorations
lead us to analyze in more detail 
the sparse paving matroids that are
relaxations of~$P_8$ and the ones 
that present the tic-tac-toe configuration.

There are only five matroids on eight elements
that satisfy the Ingleton inequality but are not linear.
Two of them are folded linear~\cite{BBFP20} 
and algebraic~\cite{Lindstrom1986,BCD18}.
The other three, which are relaxations of 
the maximal sparse paving matroid $P_8$,
are not folded linear~\cite{BBFP20} and it is not known
whether they are algebraic, almost entropic, or neither.
We 
checked by computer-aided explorations
that they do not satisfy the common information property.
Therefore, the same applies to the generalized Euclidean property.
For two of those matroids, we present a 
human readable proof for that fact.
In particular, the V\'amos configuration appears
after some generalized Euclidean extensions, 
which may be a hint indicating 
that those matroids are not algebraic.

In addition to the configurations of the 
V\'amos matroid and the aforementioned relaxations of~$P_8$,
the configurations of the circuit-hyperplanes of
the tic-tac-toe matroid $\ttt$ 
and its dual are also of interest
in regard to extension properties.
The matroid $\ttt$ does not satisfy the 
generalized Euclidean property~\cite{AlHo95}, 
but it satisfies the Ingleton-Main 
and Dress-Lov\'asz properties~\cite{Bollen18}.
In contrast, its dual matroid
does not satisfy the Ingleton-Main property~\cite{Hoc97}. Similarly to the V\'amos configuration, we show that this configuration of circuit-hyperplanes of the tic-tac-toe matroid is also a configuration that prevents linear representability of a sparse paving matroid.
We present a new, clear and complete description
of the sparse paving matroids on nine elements
that contain this configuration.
In particular, we identify two maximal sparse
paving matroids whose relaxations contain all of them.
Finally, we prove that the dual tic-tac-toe matroids
do not satisfy the Ahslwede-K\"orner property, and
hence they are not almost entropic.

%
%
%
%
%
%
%

\section{Representations of Matroids and Polymatroids}
\label{sec: prelims}

Basic facts on matroids and polymatroids and some 
different ways in which they can be represented
are discussed in this section.

We begin by introducing some notation.
The number of elements of a finite set $X$ is denoted by $|X|$ 
and $\Par(X)$ denotes its power set.
Most of the times we use a compact notation for set unions
and we avoid the curly brackets for singletons.
That is, we write $XY$ for $X \cup Y$ and $Xy$ for $X \cup \{y\}$.
In addition, we write $X \sss Y$ for the set difference
and $X \sss x$ for $X \sss \{x\}$.
For a \emph{set function} 
$f \colon \Par(\grse) \to \Real$ on a finite set $\grse$
and sets $X,Y,Z \subseteq \grse$, we denote
\[
f(X \lon Y|Z)  = f(XZ) + f(YZ) - f(XYZ) - f(Z)
\]
and, in particular, 
$f(X \lon Y)  = f(X \lon Y| \emptyset)  = f(X) + f(Y) - f(XY)$
and $f(X|Z)  = f(X \lon X | Z) = f(XZ) - f(Z)$.
A set function on $\grse$
is \emph{monotone} if  
$f(X) \le f(Y)$ whenever $X \subseteq Y \subseteq \grse$
and it is \emph{submodular} if
$f(X) + f(Y) - (X \cap Y) - f(X \cup Y) \ge 0$
for every $X,Y \subseteq \grse$.

\begin{definition}\label{df:polym1}
A \emph{polymatroid} is a pair $(\grse,f)$
formed by a finite set $\grse$ (the \emph{ground set}) 
and a  monotone and submodular
set function $f$ on $\grse$ with $f(\emptyset) = 0$
(the \emph{rank function}). 
In the particular case that $f$ is integer-valued
and $f(x) \le 1$ for every $x \in \grse$, 
the polymatroid $(\grse,f)$ is a \emph{matroid}. 
\end{definition}


A polymatroid $(\grse,f)$ is 
\emph{linearly representable over a field $\kos$}
or \emph{$\kos$-linearly representable}, or 
simply \emph{$\kos$-linear},
if there exists a collection $(V_x)_{x\in \grse}$
of vector subspaces of a $\kos$-vector space $V$ such that
\[
f(X) = \dim \sum_{x \in X} V_x
\]
for every $X \subseteq \grse$.
That collection of subspaces is a
\emph{$\kos$-linear representation} of the polymatroid.
If $(\grse,f)$ is a matroid, then
$\dim V_x \le 1$ for every $x \in \grse$, and we can replace 
each subspace $V_x$ by a vector $v_x$ that spans the subspace.
That is, linear matroids are represented by
collections of vectors.
For a positive integer $\ell$, 
a matroid $(\grse,f)$ is 
\emph{$\ell$-folded $\kos$-linear} if
the polymatroid $(\grse, \ell f)$
is $\kos$-linear.
Every $\kos$-linear representation
$(V_x)_{x \in \grse}$ of the polymatroid
$(\grse, \ell f)$ is an 
\emph{$\ell$-folded $\kos$-linear 
representation} of the matroid $(\grse,f)$.

Consider a field extension $\kosl/\kos$ and 
a collection $(e_x)_{x \in \grse}$ 
of elements in $\kosl$.
For every $X \subseteq \grse$, 
take $\kos(X) = \kos((e_x)_{x \in X})$ and let
$f(X)$ be the transcendence degree of
the field extension  $\kos(X)/\kos$.
Then $(\grse,f)$ is a matroid.
In that situation, 
$(\grse,f)$ is \emph{algebraically representable over $\kos$}
or \emph{$\kos$-algebraically representable},
or simply \emph{$\kos$-algebraic},
and $(e_x)_{x \in \grse}$ 
is a \emph{$\kos$-algebraic representation} of $(\grse,f)$.
Every $\kos$-linear matroid is $\kos$-algebraic.

The joint Shannon entropies of
a collection of random variables determine
the rank function of an \emph{entropic} 
polymatroid~\cite{Fuj78,Fuj78-2}.
Limits of entropic polymatroids are called
\emph{almost entropic}.
It is well known that linearly representable
polymatroids are almost entropic, 
see~\cite{DFZ09} for a concise proof.
As a consequence, folded linear matroids are almost entropic.
Mat\'u\v{s}~\cite{Matus24} proved that 
algebraic matroids are almost entropic too.
Nevertheless, folded linear matroids 
are not necessarily algebraic~\cite{Ben16}.
For a graphical summary of the connections
between the classes of matroids determined by their
representations, see~\cite[Figure 1]{BBFP20}.

For a polymatroid $(\grse,f)$ and disjoint sets 
$Z_1, Z_2  \subseteq \grse$, the 
\emph{deletion} of $Z_1$ and the 
\emph{contraction} of $Z_2$ results in the polymatroid
on $\grse \sss Z_1 Z_2$ with rank function
$(f \backslash Z_1 |Z_2) (X) = f(X | Z_2)$.
Such polymatroids are the \emph{minors} of $(\grse,f)$.
In particular, we denote 
$(f \backslash Z) = (f \backslash Z_1 | \emptyset)$
and $(f |Z) = (f \backslash \emptyset |Z)$.  
Observe that minors of matroids are matroids.
The \emph{dual} of a matroid $\Mat = (\grse,f)$ 
is the matroid $\Mat^* = (\grse,f^*)$ with
\[
f^*(X) = |X| - f(\grse) + f(\grse \sss X)
\]
for every $X \subseteq \grse$.
Each of the classes of matroids or polymatroids that are
determined by the described representations is
minor-closed. 
The class of linearly representable polymatroids
and the class of folded linear matroids
are duality-closed.
It is unknown whether this applies to algebraic matroids or not,
while the class of almost entropic polymatroids is not
duality-closed~\cite{Kac18,Csi19}.

We introduce next some additional
terminology and basic facts about matroids.
The \emph{independent sets} of 
a matroid  $M = (\grse,f)$ 
are those with $f(X) = |X|$.
Every subset of an independent set is independent. 
The \emph{bases} of $M$ are the maximal independent sets,
while the minimal dependent sets are called \emph{circuits}.
All bases have the same number of elements,
which equals $f(\grse)$, the \emph{rank of the matroid}.
The \emph{closure} of $X \subseteq \grse$ 
is formed by all elements $x \in \grse$ such that
$f(Xx) = f(X)$.
A \emph{flat} is a set that
is equal to its closure. 
Flats of rank 2, 3, or $f(\grse)-1$ are called 
\emph{lines}, \emph{planes}, or \emph{hyperplanes}, respectively.

A matroid of rank $k$ is \emph{paving}
if the rank of every circuit is either $k$ or $k - 1$.
It is \emph{sparse paving} if,
in addition, all circuits of rank $k - 1$ are flats,
which in that situation are called \emph{circuit-hyperplanes}.
Therefore, every sparse paving matroid $M$ is
determined by the family $\cihy(\Mat)$ of its circuit-hyperplanes.
Observe that every set in $\cihy(\Mat)$ has exactly $k$ elements
and the intersection of any two different sets in $\cihy(M)$
has at most $k-2$ elements.
Moreover, every family of sets with those
properties determines a sparse paving matroid of rank $k$. 
If $\Mat$ and $\Mat'$ are sparse paving matroids
of the same rank on the same ground set and 
$\cihy(\Mat') \subseteq \cihy(\Mat)$, then 
$\Mat'$ is a \emph{relaxation} of $\Mat$.

The vertices of the \emph{Johnson graph} $J(n,k)$
are the subsets of $k$ elements out of a set of $n$ elements,
and each of its edges joins two
subsets with $k-1$ elements in the intersection.
Therefore, the sparse paving matroids of rank $k$
on $n$ elements are in one-to-one 
correspondence with the stable sets
of the graph $J(n,k)$.
A sparse paving matroid is \emph{maximal}
if it corresponds to a maximal stable set. 


We conclude the section with a discussion 
about single-element extensions.
See~\cite[Section~7.2]{Oxley11} for proofs 
and a more detailed exposition.
Consider sets $\grse,Z$ with $\grse \cap Z = \emptyset$.
The polymatroid $(\grse Z, g)$ is an 
\emph{extension} of the polymatroid $(\grse,f)$ 
if $f = (g \backslash Z)$.
If $(\grse,f)$ and $(\grse Z,g)$ are matroids and
$Z = \{z\}$, then  $(\grse z,g)$ is a
\emph{single-element extension} of $(\grse,f)$. 
A pair $(F_1,F_2)$ of flats of a matroid $(\grse,f)$ 
is \emph{modular} if $f(F_1 \lon F_2 | F_1 \cap F_2) = 0$.

\begin{definition}\label{def:modular cuts}
A \emph{modular cut} in a matroid $M$
is a family $\mathcal{F}$ of flats satisfying the following properties.
\begin{enumerate}
\item 
If $F \in\mathcal{F}$, then every flat containing $F$ 
is in $\mathcal{F}$ too.
\item If $F_1,F_2\in\mathcal{F}$ form a modular pair of flats, then
$F_1\cap F_2\in\mathcal{F}$.
\end{enumerate}
\end{definition}

For every single-element extension $(\grse z,g)$ of
a matroid $\Mat = (\grse,f)$, the flats $F$ of $\Mat$
with $g(Fz) = g(F)$ form a modular cut.
Conversely, for every modular cut $\mathcal{F}$ in $\Mat$,
there is a single-element extension $(\grse z,g)$ of $\Mat$ such that
a flat $F$ is in $\mathcal{F}$ 
if and only if $g(Fz) = g(F)$.

\section{Extension Properties of Matroids and Polymatroids}
\label{sec: prelims2}

We present in this section a unified approach
to several extension properties of 
matroids and polymatroids that can be found in the literature.
In contrast to previous works as~\cite{BaWa89},
our definitions involve iterated extensions.
The reader will find a summary about those properties
at the end of this section.

Given a polymatroid $(\grse,f)$ and sets $X,Y \subseteq \grse$,
a \emph{common information} for the pair $(X,Y)$ in $(\grse,f)$
is a set $Z \subseteq \grse$ such that
$f(Z | X)  = f(Z | Y) = 0$ and 
$f(X \lon Y | Z) = 0$.
A motivation for this concept and its name
is found in~\cite{DFZ09,FKMP20}.
Not every pair of sets admits a common information, 
but linear polymatroids can be extended to get one.
Indeed, if $(V_x)_{x \in \grse}$ is a $\kos$-linear representation
of $(\grse,f)$, take $z \notin \grse$, the subspace
\[
V_z = 
\left(
\sum_{x \in X} V_x
\right)
\bigcap
\left(
\sum_{y \in Y} V_y
\right)
\]
and the polymatroid $(\grse z, g)$ that is 
$\kos$-linearly represented by the collection
$(V_x)_{x \in \grse z}$.
Then $(\grse z, g)$ is a $\kos$-linear
extension of $(\grse,f)$ and
$z$ is a common information 
for $(X,Y)$ in $(\grse z,g)$.

\begin{definition}
For a polymatroid $(\grse,f)$ and
sets $X,Y \subseteq \grse$, 
a \emph{common information extension}, or
\emph{{\cip} extension} for short,
for the pair $(X,Y)$ is an extension
$(\grse Z,g)$ of the polymatroid $(\grse,f)$ such that
$Z$ is a common information
for $(X,Y)$ in $(\grse Z,g)$.
\end{definition}

\begin{definition}
\label{def:kci}
We recursively define \emph{$k$-{\cip} polymatroids}.
Every polymatroid is $0$-{\cip}.
For a positive integer $k$,  
a polymatroid is $k$-{\cip} if,
for every pair of subsets of the ground set, 
it admits a {\cip} extension that is a 
$(k-1)$-{\cip} polymatroid.
A polymatroid satisfies the
\emph{common information property},
or it is a \emph{{\cip} polymatroid},
if it is $k$-{\cip} for every 
positive integer $k$.
\end{definition}

\begin{definition}
\label{def:classci}
A class of polymatroids satisfies
the \emph{common information property}
if each of its members does and the {\cip} extensions
can be performed inside the class. 
\end{definition}

Observe that a class of polymatroids satisfies
the common information property
if and only if, for every polymatroid in the class
and every pair of subsets of the ground set, 
there exists a {\cip} extension inside the class.
The next result follows from the discussion opening this section.

\begin{proposition}
\label{st:civs}
For every field $\kos$,
the class of $\kos$-linearly representable polymatroids 
satisfies the common information property.
\end{proposition}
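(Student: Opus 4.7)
The plan is to exhibit, for any $\kos$-linearly representable polymatroid $(\grse,f)$ and any pair $X,Y\subseteq\grse$, a single-element {\cip} extension that is itself $\kos$-linear, and then iterate by induction on $k$.

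First I would fix a $\kos$-linear representation $(V_x)_{x\in\grse}$ of $(\grse,f)$ in an ambient $\kos$-vector space $V$, and for any $X\subseteq\grse$ write $V_X=\sum_{x\in X}V_x$, so that $f(X)=\dim V_X$. Given the distinguished pair $(X,Y)$, I would introduce a fresh element $z\notin\grse$ and set
\[
V_z \;=\; V_X\cap V_Y.
\]
The collection $(V_x)_{x\in\grse z}$ is then a $\kos$-linear representation of a polymatroid $(\grse z,g)$, and by construction $(\grse z,g)$ is an extension of $(\grse,f)$ that stays inside the class of $\kos$-linearly representable polymatroids.

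Next I would verify that $Z=\{z\}$ is a common information for $(X,Y)$ in $(\grse z,g)$. The inclusions $V_z\subseteq V_X$ and $V_z\subseteq V_Y$ give $V_{Xz}=V_X$ and $V_{Yz}=V_Y$, so $g(z\mid X)=g(z\mid Y)=0$. For the third condition, the standard dimension formula yields
\[
\dim V_z \;=\; \dim V_X+\dim V_Y-\dim(V_X+V_Y) \;=\; f(X)+f(Y)-f(XY),
\]
and since $V_{XYz}=V_{XY}$ we obtain
\[
g(X\lon Y\mid z)\;=\;f(X)+f(Y)-f(XY)-\dim V_z\;=\;0,
\]
confirming that $(\grse z,g)$ is a {\cip} extension for $(X,Y)$.

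Finally I would close the argument by induction on $k$, using Definitions~\ref{def:kci} and~\ref{def:classci}. The base case $k=0$ is vacuous. For the inductive step, given any $\kos$-linear polymatroid and any pair $(X,Y)$, the extension constructed above is a $\kos$-linear {\cip} extension, hence by the induction hypothesis it is $(k-1)$-{\cip}; this shows the original polymatroid is $k$-{\cip} and keeps the extensions inside the class, so Definition~\ref{def:classci} applies. Since the whole argument is driven by the intersection-of-subspaces construction, there is no real obstacle; the only point requiring care is bookkeeping in the inductive step to ensure the successive new elements $z_1,z_2,\dots$ are distinct from the ground set and from each other, which is handled by always choosing a fresh symbol at each stage.
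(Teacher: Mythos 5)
Your proposal is correct and follows exactly the paper's argument: the paper's proof is a one-line reference to the intersection-of-subspaces construction $V_z = V_X \cap V_Y$ presented at the opening of Section~\ref{sec: prelims2}, which is precisely what you spell out, together with the routine induction on $k$. No differences of substance.
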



The previous description of the common information property
provides a template to introduce other
extension properties. 
In particular, 
Definitions~\ref{def:kci} and~\ref{def:classci}
are easily adapted and we are not going to repeat them.
We continue with an extension property for matroids, the
\emph{generalized Euclidean property},
which is based on the 
intersection property with the same name 
discussed in~\cite{BaWa89}.

\begin{definition}
For a matroid $(\grse,f)$ and a non-modular pair of flats $(F_1,F_2)$,
a \emph{generalized Euclidean extension},
or  \emph{{\gep} extension}, for the pair $(F_1,F_2)$ 
is a  matroid $(\grse z,g)$
that is a single-element extension of $(\grse,f)$
satisfying $g(z|F_1) = g(z|F_2) = 0$ and 
$g(F_1 z \cap F_2 z) = f(F_1 \cap F_2) + 1$.
That is, the extension increases 
the rank of the intersection of those flats.
\end{definition}

And similarly to Proposition~\ref{st:civs}, we have the following result. See~\cite{BaWa89} for more details.
\begin{proposition}
For every field $\kos$, the class of
$\kos$-linearly representable matroids satisfies the
generalized Euclidean property.
\end{proposition}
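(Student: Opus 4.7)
The plan is to follow the same template as the proof of Proposition~\ref{st:civs}: given a $\kos$-linear representation of $\Mat = (\grse, f)$, exhibit a vector whose adjunction produces a $\kos$-linear single-element extension realising the three rank equalities required by the definition. To start, I would fix a representation in which each $V_x$ is spanned by a single vector $v_x$ (since $\Mat$ is a matroid) and set $V_F = \sum_{x \in F} V_x$ for every flat $F$, so that $\dim V_F = f(F)$ and $V_{F_1 \cup F_2} = V_{F_1} + V_{F_2}$.

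Next, I would translate the hypothesis that $(F_1, F_2)$ is non-modular into linear terms. Combining the standard identity $\dim(V_{F_1} + V_{F_2}) = \dim V_{F_1} + \dim V_{F_2} - \dim(V_{F_1} \cap V_{F_2})$ with the trivial inclusion $V_{F_1 \cap F_2} \subseteq V_{F_1} \cap V_{F_2}$, the assumption $f(F_1 \lon F_2 \mid F_1 \cap F_2) > 0$ becomes the strict inequality $\dim(V_{F_1} \cap V_{F_2}) > \dim V_{F_1 \cap F_2}$. Hence one can pick a vector $v_z \in (V_{F_1} \cap V_{F_2}) \sss V_{F_1 \cap F_2}$, which is automatically non-zero.

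Then I would adjoin a new element $z \notin \grse$ with associated subspace $V_z = \kos v_z$ and let $g$ be the rank function of the $\kos$-linear polymatroid determined by $(V_x)_{x \in \grse z}$. Since each $V_x$ has dimension at most one, $(\grse z, g)$ is a matroid, and by construction $(g \backslash z) = f$, so it is a single-element $\kos$-linear extension of $\Mat$. The three required conditions can then be read directly off the representation: $v_z \in V_{F_1}$ gives $g(z \mid F_1) = 0$, $v_z \in V_{F_2}$ gives $g(z \mid F_2) = 0$, and $v_z \notin V_{F_1 \cap F_2}$ together with the set identity $F_1 z \cap F_2 z = (F_1 \cap F_2) z$ yields $g(F_1 z \cap F_2 z) = f(F_1 \cap F_2) + 1$.

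I do not expect any real obstacle here: the entire argument is a routine application of the subspace dimension formula, exactly mirroring the construction used for the common information property, and the resulting extension is manifestly $\kos$-linear, so the class-level version of the property (the analogue of Definition~\ref{def:classci} for the {\gep} extension) is also satisfied.
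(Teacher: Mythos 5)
Your proposal is correct and is essentially the paper's own argument: the paper's proof simply says to mirror the construction used for the common information property, which is exactly what you do by adjoining a one-dimensional subspace spanned by a vector of $(V_{F_1}\cap V_{F_2})\sss V_{F_1\cap F_2}$, whose existence is guaranteed by non-modularity. All the verifications (matroidness of the extension, the two conditional-rank conditions, and the rank increase on $F_1 z\cap F_2 z=(F_1\cap F_2)z$) are carried out correctly.
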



Different extensions can determine the
same extension property.
This is the situation for
{\gep} extensions
and complete Euclidean extensions, 
which are defined next. 
 
\begin{definition}
For a matroid $(\grse,f)$ and a pair of flats $(F_1,F_2)$,
a \emph{complete Euclidean extension},
or  \emph{{\cep} extension}, for the pair $(F_1,F_2)$ 
is a matroid $(\grse Z,g)$, which is an extension of $(\grse,f)$,
such that $g(Z|F_1) = g(Z|F_2) = 0$
and $(F_1 Z, F_2 Z)$ is 
a modular pair of flats in $(\grse Z,g)$.
Observe that one can take $Z = \emptyset$
if $(F_1,F_2)$ is a modular pair of flats.
\end{definition}

Observe that a {\gep} extension
can be trivially obtained from a {\cep} extension
and, conversely, every {\cep} extension
is the result of a sequence of {\gep} extensions.
Therefore, they determine the same extension property.
Nevertheless, 
the feasibility of computer-aided explorations
may depend on the chosen extension.

A matroid $\Mat = (\grse,f)$ of rank $d$
satisfies the \emph{Levi's intersection property}
if for every $d-1$ hyperplanes there is an extension of
$\Mat$ in which they meet in at least one point. 
That property,
which is discussed in~\cite{BaWa89},
can be used as well to describe
an extension property, 
but it is again equivalent to the generalized Euclidean property. 
That fact is well known,
but we could not find any
detailed proof in the literature.

\begin{proposition}
The extension property for matroids defined
from the Levi's intersection property
is equivalent to the 
generalized Euclidean property.
\end{proposition}

\begin{proof}
Let $\Mat = (\grse,f)$ be a matroid of rank $d$.
A point in the intersection of $d-1$ hyperplanes
$(H_1, \ldots, H_{d-1})$ of $\Mat$
can be found by a series of {\cep} extensions,
beginning with the pair $(H_1,H_2)$, and
then the pair formed by the intersection
of those hyperplanes and $H_3$, and so on. 
For the converse, consider a non-modular pair of flats $(F_1,F_2)$ and take 
$r_1 = f(F_1)$, $r_2 = f(F_2)$,
$s = f(F_1 F_2)$, and $t = f(F_1 \cap F_2)$.
Since the pair is non-modular, 
$t < r_1 + r_2 - s$.
For $i = 1,2$, take a basis $B_i$ of $F_i$
and a set $B'_i$ such that
$B_i B'_i$ is a basis of the flat 
spanned by $F_1 F_2$.
In addition, take a set $B''$ such that both
$B_1 B'_1 B''$ and $B_2 B'_2 B''$
are bases of $\Mat$.
Observe that $|B'_i| = s - r_i$ 
and $|B''| = d - s$.
For every $x \in B'_1 B''$,
take the hyperplane $H_x$ spanned by
$B_1 B'_1 B'' \sss x$ while, 
for every $x \in B'_2$,
the hyperplane $H_x$ will be the one
spanned by $B_2 B'_2 B'' \sss x$.
Take $C = B'_1 B'_2 B''$.
Then $(H_x)_{x \in C}$ is a collection of
$d + s - r_1 - r_2$
hyperplanes such that each of them 
contains $F_1$ or $F_2$ and
their intersection coincides with
$F_1 \cap F_2$. 
Take a basis $B$ of $M$ that contains a basis 
$D$ of $F_1 \cap F_2$.
For every $x \in D$,
take the hyperplane $H_x$ 
spanned by $B \sss x$.
Since $|D| = t$, 
we have collected up to now
$d + s - r_1 - r_2 + t$ hyperplanes,
a quantity that is not larger than $d-1$.
Each element in their intersection
is in $F_1 \cap F_2$ but it is not in the closure of 
$D$, which implies that there is no such element.
Finally, the Levi's intersection property
implies that $\Mat$ admits a 
single element extension $(\grse z,g)$
such that $z$ is in the intersection
of the hyperplanes $(H_x)_{x \in C D}$,
but not in the span of $F_1 \cap F_2$.
Therefore,  $(\grse z,g)$ is a 
$\gep$ extension for the pair $(F_1,F_2)$.
\end{proof}

The extension property
for the class of algebraic matroids 
that is defined next
is a direct consequence of the 
Ingleton-Main lemma~\cite{InMa75}. The
\emph{Dress-Lov\'asz property} is a more restrictive
extension property for algebraic matroids that
is derived from the generalization of
that lemma presented in~\cite{DrLo87}.
The reader is referred to~\cite{Bollen18}
for a description of the Dress-Lov\'asz extension property.

\begin{definition}
For a matroid $(\grse,f)$ and three lines
$\ell_1, \ell_2, \ell_3$ with 
$f(\ell_i \ell_j) = 3$ if $1 \le i < j \le 3$
and $f(\ell_1 \ell_2 \ell_3) = 4$,
an \emph{Ingleton-Main extension}, 
or \emph{{\imp} extension}, for the triple 
$(\ell_1, \ell_2,\ell_3)$
is a single-element extension
$(\grse z,g)$ of $(\grse,f)$ such that $g(z) = 1$ and
$g(z | \ell_i) = 0$ for $i = 1,2,3$.
That is, the three lines intersect in one point
in the extension.
\end{definition}

\begin{proposition}
For every field $\kos$, 
the class of $\kos$-algebraic matroids
satisfies the Ingleton-Main property.
\end{proposition}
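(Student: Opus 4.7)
The plan is to reduce the proposition to the classical Ingleton--Main lemma~\cite{InMa75}, rephrased in the extension-property language of this paper. Start from a $\kos$-algebraic representation $(e_x)_{x\in\grse}$ of $(\grse,f)$ inside a field extension $\kosl/\kos$. What is needed is a further extension $\kosl'/\kosl$ and an element $e_z\in\kosl'$ such that, if $g(X)$ denotes the transcendence degree of $\kos((e_x)_{x\in X})$ over $\kos$ for every $X\subseteq\grse z$, then $g(z)=1$ and $g(z\mid\ell_i)=0$ for $i=1,2,3$. Given such an $e_z$, the matroid $(\grse z,g)$ is $\kos$-algebraic by construction, and since $g$ restricted to $\Par(\grse)$ equals $f$ it is automatically a single-element extension of the required form.

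The existence of $e_z$ is the content of Ingleton--Main, and I would establish it through the standard derivation argument. Choose two-element generating subsets $\{a_i,b_i\}$ of each line $\ell_i$, and consider the $\kosl$-vector space $D$ of $\kos$-derivations of $\kosl$ into a large enough module. For every $X\subseteq\grse$, write $D_X$ for the subspace of derivations that vanish on $\{e_x:x\in X\}$; under separability, $\dim D-\dim D_X$ equals $f(X)$. The hypotheses $f(\ell_i\ell_j)=3$ and $f(\ell_1\ell_2\ell_3)=4$ force, by a dimension count on the subspaces $D_{\ell_i}$, the existence of a codimension-one subspace $H\subseteq D$ that contains all three $D_{\ell_i}$. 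Any element $e_z\in\kosl'$ whose derivation-annihilator is exactly $H$ is then transcendental over $\kos$ and algebraic over each $\kos(\ell_i)$, and thus yields the desired rank function $g$.

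The main obstacle is inseparability in positive characteristic, where derivations do not faithfully detect transcendence degree and the identity $\dim D-\dim D_X=f(X)$ can fail. The plan there is to pass to the separable closure of $\kos((e_x)_{x\in\grse})$ inside an algebraic closure of $\kosl$, or alternatively to invoke the refinement of Lindstr\"om~\cite{Lindstrom1988} that covers the non-separable case. Once this subtlety is handled, the residual verifications, namely that $g$ is a polymatroid rank function, that it defines a single-element extension of $(\grse,f)$, and that algebraicity of $e_z$ over each $\kos(\ell_i)$ indeed translates to $g(z\mid\ell_i)=0$, are direct computations with transcendence degrees and require no new ideas.
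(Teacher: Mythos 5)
The paper gives no proof of this proposition at all: it is stated as a direct consequence of the Ingleton--Main lemma~\cite{InMa75}, which is exactly the reduction in your first paragraph, so your argument coincides with the paper's (implicit) one. Your additional sketch of the lemma itself via derivations is material the paper does not attempt; note only that the genuine content there is the existence of an element $e_z$ realizing the hyperplane $H$ (equivalently, that the distinguished line of differentials is spanned by an exact one), which your sketch asserts rather than proves and which is precisely what Ingleton and Main establish.
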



We present next two extension properties
for almost entropic polymatroids.
The first one is based on the
Ahlswede-K\"orner lemma~\cite{AhKo77,AhKo06}
as stated in~\cite[Lemma~2]{Kac13}.
The second one is based on the copy lemma, 
which was proved in~\cite{DFZ11}
and was implicitly used before in~\cite{ZhYe98}
to find the first known non-Shannon information inequality.
%

\begin{definition}
For a polymatroid $(\grse,f)$ and sets 
$X, Y \subseteq \grse$,
an extension $(\grse Z, g)$ of $(\grse,f)$ is 
an \emph{Ahlswede-K\"orner extension},
or \emph{{\akp} extension}, for the pair 
$(X, Y)$ if the following conditions are satisfied.
\begin{itemize}
\item  
$g(Z|X) = 0$
\item
$g(X' | Z)=g(X' |Y)$ 
for every $X' \subseteq X$
\end{itemize}
\end{definition}

\begin{definition}
For a polymatroid $(\grse,f)$ and sets 
$X_1,X_2,Y \subseteq \grse$,
an extension $(\grse Z, g)$ of $(\grse,f)$ is 
a \emph{copy lemma extension},
or \emph{{\clp} extension}, for $(X_1,X_2,Y)$ 
if the following conditions are satisfied.
\begin{itemize}
\item  
There is a bijection 
$\varphi \colon Y \to Z$ that determines an isomorphism
between the polymatroids
$(X_1 Y,(g \backslash A))$ and 
$(X_1 Z,(g \backslash B))$,
where $A = \grse Z \sss X_1 Y$ and $B = \grse Z \sss X_1 Z$ 
\item
$g(Z \lon X_2 Y | X_1) = 0$
\end{itemize}
\end{definition}

\begin{proposition}
The class of almost entropic polymatroids
satisfies both the Ahlswede-K\"orner and copy lemma properties.
\end{proposition}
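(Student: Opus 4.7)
The plan is to prove both properties first for entropic polymatroids and then transfer them to the class of almost entropic polymatroids by a standard closure argument, using that the almost entropic cone is the topological closure of the entropic one and that the conditions defining a {\clp} or {\akp} extension are preserved under pointwise limits of rank functions.

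For the copy lemma property, given an entropic polymatroid $(\grse, f)$ realized by random variables $(V_e)_{e \in \grse}$, I would build the extension by conditional copying. Introduce auxiliary variables $(V_z)_{z \in Z}$ indexed by a disjoint copy $Z$ of $Y$ under the bijection $\varphi$, and define the joint distribution of $(V_e)_{e \in \grse Z}$ so that, conditioned on $V_{X_1}$, the block $V_Z$ is distributed as a $\varphi$-transported copy of $V_Y$ and is conditionally independent of $V_{\grse \sss X_1}$. The Shannon entropies of this joint distribution define an extension $g$ of $f$. The isomorphism condition holds because the marginal of $V_{X_1 Z}$ equals, under the extension of $\varphi$ by the identity on $X_1$, the marginal of $V_{X_1 Y}$; and the equality $g(Z \lon X_2 Y \mid X_1) = 0$ is exactly the conditional independence built into the construction.

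For the Ahlswede-K\"orner property, I would appeal to the Ahlswede-K\"orner lemma as formulated in~\cite[Lemma~2]{Kac13}: for every entropic polymatroid, every pair $(X, Y)$, and every $\epsilon > 0$ there is an entropic extension $(\grse Z_\epsilon, g_\epsilon)$ with $g_\epsilon(Z_\epsilon \mid X) = 0$ and $|g_\epsilon(X' \mid Z_\epsilon) - f(X' \mid Y)| < \epsilon$ for every $X' \subseteq X$. Normalizing these extensions so that the ranks on the auxiliary coordinates are uniformly bounded by $f(X)$, the sequence lives in a compact box in $\Real^{\Par(\grse Z)}$, so one extracts a convergent subsequence whose limit is an \emph{exact} {\akp} extension inside the closed cone of almost entropic polymatroids. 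Finally, for a general almost entropic $(\grse, f) = \lim_n f_n$ with entropic $f_n$, apply the appropriate construction to each $f_n$ and use a diagonal compactness argument to obtain the desired almost entropic extension.

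The main obstacle is the asymptotic nature of Ahlswede-K\"orner. The copy lemma construction is exact at the level of entropic polymatroids, so passing to the almost entropic closure is routine; by contrast, the AK extension for entropic polymatroids is only approximate, and one must carefully normalize the auxiliary coordinates, control the growth of $g_\epsilon(Z_\epsilon)$, and verify that monotonicity and submodularity survive the combined $\epsilon \to 0$ and $n \to \infty$ limits. Once these technicalities are handled, both properties follow immediately from their entropic counterparts.
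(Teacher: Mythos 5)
The paper offers no proof of this proposition at all: it is stated as a direct consequence of the Ahlswede--K\"orner lemma in the form of~\cite[Lemma~2]{Kac13} and of the copy lemma from~\cite{DFZ11,ZhYe98}, so there is no argument in the text to compare yours against step by step. Your sketch is essentially a faithful reconstruction of the standard proofs behind those citations, and it is correct in outline. For the copy lemma, the conditional-product construction does give an \emph{exact} entropic {\clp} extension, the defining conditions are finitely many equalities of rank values and hence survive pointwise limits, and the uniform bound $g(Z) \le g(Y)$ gives the compactness needed to transfer the property to the closure; this is the routine half, as you say. For Ahlswede--K\"orner you correctly isolate the real technical content: the entropic version is only asymptotic, the approximate extensions are built on normalized $n$-fold i.i.d.\ copies and so are scalings of entropic points rather than entropic points themselves, and one needs the fact that the almost entropic region is a closed convex cone (in particular closed under positive scaling) to conclude that the limit is an exact {\akp} extension lying inside the class. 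Two details worth making explicit in a write-up: the restriction of the normalized $n$-fold extension to $\grse$ equals $f$ exactly (entropy is additive over i.i.d.\ copies), so the approximate extensions really are extensions of $f$ itself and the diagonal argument is only needed when $f$ is almost entropic but not entropic; and Definition~\ref{def:classci} requires the extensions to be iterable inside the class, which your construction delivers because the limit point is again almost entropic. With those points spelled out, your argument is a complete proof of what the paper takes as known.
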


If a matroid satisfies the generalized Euclidean property,
then it satisfies the Ingleton-Main property too,
because the latter deals with intersection of lines,
that is, flats of rank $2$.
Every $\cip$-polymatroid 
is an $\akp$-polymatroid~\cite{FKMP20}. 
An additional result 
on the connection between the common information
and Ahlswede-K\"orner properties is provided by the next proposition,
whose elementary proof is given 
in~\cite[Proposition~III.16]{FKMP20}.

\begin{proposition}
\label{st:civsak}
Let $(\grse Z,g)$ be an {\akp} extension of $(\grse,f)$ for the pair $(X,Y)$.
Then $g(Z) = g(X \lon Y)$.
Consequently, $(\grse Z,g)$ is a {\cip} extension 
for the pair $(X,Y)$ if and only if $g(Z|Y) = 0$.
\end{proposition}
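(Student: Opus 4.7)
The plan is to unpack the two Ahlswede–Körner conditions and use them, together with submodularity, to derive the identity $g(Z)=g(X\lon Y)$ and then bound $g(X\lon Y\,|\,Z)$.

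First I would establish the equality $g(Z)=g(X\lon Y)$. Apply the second {\akp} condition with $X'=X$ to obtain $g(X\,|\,Z)=g(X\,|\,Y)$, which expands to $g(XZ)-g(Z)=g(XY)-g(Y)$. The first {\akp} condition $g(Z\,|\,X)=0$ says $g(XZ)=g(X)$. Substituting and rearranging yields
\[
g(Z) = g(X)+g(Y)-g(XY) = g(X\lon Y),
\]
which is the first claim.

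For the second claim, the direction $(\Rightarrow)$ is immediate from the definition of a {\cip} extension, which requires $g(Z\,|\,Y)=0$. For the converse, assume the {\akp} extension additionally satisfies $g(Z\,|\,Y)=0$, i.e.\ $g(YZ)=g(Y)$. Two of the three {\cip} conditions, namely $g(Z\,|\,X)=0$ and $g(Z\,|\,Y)=0$, are then immediate, so the only thing to verify is $g(X\lon Y\,|\,Z)=0$. Rewriting and substituting $g(XZ)=g(X)$ and $g(YZ)=g(Y)$ gives
\[
g(X\lon Y\,|\,Z) = g(XZ)+g(YZ)-g(XYZ)-g(Z) = g(X)+g(Y)-g(XYZ)-g(Z),
\]
and monotonicity $g(XYZ)\ge g(XY)$ together with the already-proved identity $g(Z)=g(X\lon Y)$ gives
\[
g(X\lon Y\,|\,Z) \le g(X)+g(Y)-g(XY)-g(Z) = g(X\lon Y)-g(Z) = 0.
\]
Submodularity guarantees $g(X\lon Y\,|\,Z)\ge 0$, so equality holds.

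There is no real obstacle here; the proof is a short symbolic manipulation once one chooses the right instance $X'=X$ of the {\akp} condition. The only place to be careful is the final step, where one must combine monotonicity with the identity from the first part rather than trying to force $g(XYZ)=g(XY)$ directly, which is not in general available from the hypotheses.
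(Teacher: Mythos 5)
Your proof is correct. The paper does not include its own argument for this proposition---it defers to \cite[Proposition~III.16]{FKMP20} for an ``elementary proof''---and your derivation (taking $X'=X$ in the second {\akp} condition to get $g(Z)=g(X\lon Y)$, then using nonnegativity of $g(X\lon Y|Z)$ together with an upper bound of $0$) is exactly that kind of short symbolic manipulation, so there is nothing to flag. One small quibble with your closing remark: $g(XYZ)=g(XY)$ \emph{is} in fact available from the hypotheses, since $0\le g(Z|XY)\le g(Z|X)=0$ by submodularity; your route via monotonicity and the identity $g(Z)=g(X\lon Y)$ is equally valid, but the ``direct'' route you dismiss would also have worked.
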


The common information and
complete Euclidean properties are very similar.
The main difference is that
the latter is an extension property of matroids, 
that is, we require that the extension is a matroid, 
while the former is an extension property of polymatroids. 
Folded linear matroids satisfy the common information property
but it is not clear that they satisfy the complete Euclidean property,
because it could be that the extension derived 
from a given folded linear representation is not a matroid.
Nevertheless, no example of a matroid
satisfying the common information property but not
the complete Euclidean property is known.

Proposition~\ref{st:IMvsAK} describes a connection
between the Ingleton-Main and Ahlswede-K\"orner extensions.
Nevertheless, we have to remember that the first one applies
to matroids and the second one to polymatroids.
A proof for the following technical result
can be found in~\cite[Proposition~III.13]{FKMP20}.

\begin{lemma}
\label{st:ciprop1}
Consider a polymatroid  $(\grse,f)$ and  
subsets $X,Y, U \subseteq \grse$
such that $f(U|X) = f(U|Y) = 0$.
Then $f(U) \le f(X \lon Y)$.
Moreover, if $Z \subseteq \grse$ 
is a common information for $(X,Y)$,
then $f(U|Z) = 0$.
\end{lemma}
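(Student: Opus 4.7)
The plan is to derive both parts from a single well-chosen application of submodularity, namely to the pair of sets $UX$ and $UY$.

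For the first assertion, I would exploit the set identities $(UX) \cup (UY) = UXY$ and $(UX) \cap (UY) = U \cup (X \cap Y)$. Submodularity yields $f(UX) + f(UY) \geq f(UXY) + f(U \cup (X \cap Y))$. The hypotheses $f(U|X) = f(U|Y) = 0$ rewrite as $f(UX) = f(X)$ and $f(UY) = f(Y)$, while monotonicity gives $f(UXY) \geq f(XY)$ and $f(U \cup (X \cap Y)) \geq f(U)$. Chaining these four facts collapses the inequality to
\[
f(X) + f(Y) \ge f(XY) + f(U),
\]
which is exactly $f(U) \le f(X \lon Y)$.

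For the second part, the preparatory step is to note that the conditional independence of $U$ persists after enlarging the conditioning sets to $XZ$ and $YZ$: submodularity of $f$ makes conditional ranks non-increasing in the conditioning set, so $f(U|XZ) \le f(U|X) = 0$ and similarly $f(U|YZ) = 0$. In particular, $f(UZ|XZ) = f(U|XZ) = 0$ and the analogous identity holds for $Y$, which supplies the hypotheses needed to invoke the first part a second time.

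I would then reuse the first part of the lemma with $UZ$ in place of $U$ and $XZ, YZ$ in place of $X, Y$. The resulting bound is
\[
f(UZ) \le f(XZ \lon YZ) = f(Z) + f(X \lon Y | Z) = f(Z),
\]
where the last equality uses the defining property $f(X \lon Y | Z) = 0$ of a common information. Combined with $f(UZ) \geq f(Z)$ from monotonicity, this yields $f(U|Z) = 0$, as required. The argument is essentially routine; the only moment of care is recognizing that submodularity is what transfers the hypothesis from $X, Y$ to $XZ, YZ$, thereby letting the first part be invoked a second time in the enlarged setting.
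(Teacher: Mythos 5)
Your argument is correct in both parts. Note that the paper itself does not spell out a proof of this lemma; it defers to~\cite[Proposition~III.13]{FKMP20}, so there is no in-paper proof to compare against. Your route is a clean, self-contained one: the single submodularity instance applied to $UX$ and $UY$, combined with $(UX)\cap(UY)=U\cup(X\cap Y)$ and monotonicity, gives $f(U)\le f(X\lon Y)$ exactly as you claim; and the reduction of the second part to the first via the substitution $U\mapsto UZ$, $X\mapsto XZ$, $Y\mapsto YZ$ is valid, since $f(U|XZ)\le f(U|X)=0$ follows from submodularity and $f(XZ\lon YZ)=f(Z)+f(X\lon Y|Z)=f(Z)$ is a direct expansion. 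The reference's proof is phrased in the language of mutual-information identities (essentially $f(X\lon Y)\ge f(U\lon Y)=f(U)$ using $f(U\lon Y|X)\le f(U|X)=0$), which is an equivalent bookkeeping of the same Shannon-type inequalities; your version makes the underlying submodularity application explicit, which is arguably more transparent in the polymatroid setting.
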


\begin{proposition}
\label{st:IMvsAK}
Consider a polymatroid $(\grse,f)$
and sets $L_0, L_1, L_2 \subseteq \grse$ with
$f(L_i) = 2$, $f(L_i L_j) = 3$ if $i \ne j$, and
$f(L_0 L_1 L_2) = 4$.
Let $(\grse Z,g)$ be an {\akp} extension for the pair
$(L_1 L_2,L_0)$.
Then $g(Z|L_i) = 0$ for each $i = 0,1,2$.
\end{proposition}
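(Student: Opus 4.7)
The plan is to unpack the two defining conditions of the {\akp} extension, $g(z\mid L_1L_2)=0$ and $g(X'\mid z)=g(X'\mid L_0)$ for $X'\subseteq L_1L_2$, into explicit rank equalities, and then to exploit the rank-$4$ structure of $L_0L_1L_2$ to pin down $g(L_0z)$ via submodularity.

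First, I would apply the equality $g(X'\mid z)=g(X'\mid L_0)$ with $X'=L_1L_2$ to obtain
\[
g(L_1L_2\mid z) \;=\; g(L_1L_2\mid L_0) \;=\; f(L_0L_1L_2)-f(L_0) \;=\; 4-2 \;=\; 2.
\]
Since $g(z\mid L_1L_2)=0$ gives $g(L_1L_2 z)=g(L_1L_2)=3$, this forces $g(z)=1$. Next, applying the same equality with $X'=L_i$ for $i\in\{1,2\}$ yields $g(L_i\mid z)=g(L_i\mid L_0)=f(L_0L_i)-f(L_0)=1$, so
\[
g(L_i z) \;=\; g(z)+g(L_i\mid z) \;=\; 1+1 \;=\; 2 \;=\; g(L_i),
\]
which is exactly $g(z\mid L_i)=0$. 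This settles the cases $i=1,2$.

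For $i=0$, I would use what we just proved to deduce $g(L_0L_i z)=g(L_0L_i)=3$ for $i=1,2$ (since $z$ lies in the closure of $L_i$) and $g(L_0L_1L_2 z)=g(L_0L_1L_2)=4$. Then the submodular inequality $g(L_1\lon L_2\mid L_0 z)\ge 0$ reads
\[
g(L_0L_1 z)+g(L_0L_2 z) \;\ge\; g(L_0L_1L_2 z)+g(L_0 z),
\]
that is, $3+3\ge 4+g(L_0 z)$, so $g(L_0 z)\le 2$. Combined with the monotonicity bound $g(L_0 z)\ge g(L_0)=2$, this gives $g(L_0 z)=g(L_0)$, i.e.\ $g(z\mid L_0)=0$.

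The only mildly subtle step is the last one: the three straightforward inequalities (subadditivity, monotonicity, and the submodularity $g(L_0\lon L_i\mid z)\ge 0$) all provide lower bounds on $g(L_0 z)$, and so they cannot close the argument on their own. The key observation is that the non-modular triple configuration $f(L_0L_1L_2)=4$ together with $f(L_0L_i)=3$ yields a genuine upper bound through submodularity in the conditional form $g(L_1\lon L_2\mid L_0 z)\ge 0$, which is exactly where the Ingleton--Main hypotheses on the three lines are used.
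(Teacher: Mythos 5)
Your proof is correct and follows essentially the same route as the paper's: the cases $i=1,2$ are obtained identically from the two defining {\akp} conditions, and your explicit submodularity computation for $i=0$ (using $g(L_0L_1z)=g(L_0L_2z)=3$, $g(L_0L_1L_2z)=4$) is precisely the argument that the paper delegates to Lemma~\ref{st:ciprop1}, applied with $L_0$ as a common information for the modular pair $(L_0L_1,L_0L_2)$. The only difference is cosmetic: you re-derive $g(z)=1$ and the final upper bound by hand instead of citing Proposition~\ref{st:civsak} and Lemma~\ref{st:ciprop1}.
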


\begin{proof}
By Proposition~\ref{st:civsak},
$g(Z) = g(L_1 L_2 \lon L_0) = 1$
and, by the definition of {\akp} extension,
$g(L_1|Z) = g(L_1|L_0) = 1$.
Therefore, $g(L_1 Z) = g(Z) + g(L_1|Z) = 2$, 
and hence $g(Z|L_1) = 0$.
By symmetry, $g(Z|L_2) = 0$.
Since $L_0$ is a common information for the 
pair $(L_0 L_1,L_0 L_2)$ and 
$g(Z|L_0 L_1) = g(Z|L_0 L_2) = 0$,
by Lemma~\ref{st:ciprop1} $g(Z|L_0) = 0$.
\end{proof}


The generalized Euclidean property
is preserved by taking minors~\cite{BaWa89},
and the same applies to the Ingleton-Main 
and Dress-Lov\'asz properties~\cite{Bollen18}.
We prove in Propositions~\ref{st:cimin} 
and~\ref{st:akmin} that both the
common information and Ahlswede-K\"orner properties
are preserved by taking minors as well.
Since it is obvious that this is the case for deletions, 
we consider only contractions.

\begin{proposition}\label{st:cimin}
Consider a polymatroid $(\grse,f)$ and sets 
$U \subseteq \grse$ and $X,Y \subseteq \grse\sss U$. 
Let $(\grse Z,g)$ be a {\cip} extension of $(\grse,f)$
for the pair $(XU,YU)$. 
Then $(\grse Z \sss U, (g|U))$ 
is a {\cip} extension of 
$(\grse\sss U, (f|U))$ for the pair $(X,Y)$.
\end{proposition}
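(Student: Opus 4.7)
The plan is to unpack both definitions and verify each condition in turn. First, for $(\grse Z \sss U, (g|U))$ to be an extension of $(\grse \sss U, (f|U))$ we must check that deleting $Z$ from it recovers $(f|U)$. For any $A \subseteq \grse \sss U$, both $(f|U)(A)$ and $\bigl((g|U) \backslash Z\bigr)(A)$ evaluate to $g(AU) - g(U) = f(AU) - f(U)$, using that $(\grse Z,g)$ extends $(\grse,f)$ and that $AU \subseteq \grse$. So the extension condition is routine.

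Next I would verify the three conditions defining a common-information extension for $(X,Y)$. Two of them are immediate algebraic identities: since $U$ is common to both sides of every conditional, one computes directly
\[
(g|U)(Z \mid X) = g(XZU) - g(XU) = g(Z \mid XU) = 0,
\]
and analogously $(g|U)(Z \mid Y) = g(Z \mid YU) = 0$. The same cancellation applied to the four-term expression gives
\[
(g|U)(X \lon Y \mid Z) \;=\; g(XUZ) + g(YUZ) - g(XYUZ) - g(ZU).
\]
The target is to show this equals zero, whereas our hypothesis only provides $g(XU \lon YU \mid Z) = g(XUZ) + g(YUZ) - g(XYUZ) - g(Z) = 0$.

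The main obstacle is therefore the discrepancy between conditioning on $Z$ and conditioning on $ZU$: the two expressions differ by $g(ZU) - g(Z) = g(U \mid Z)$, and I need to prove this difference vanishes. The key observation is that $U \subseteq XU \cap YU$, so by submodularity followed by monotonicity of the conditional rank function $g(\,\cdot\mid Z)$,
\[
g(XU \lon YU \mid Z) \;\ge\; g(XU \cap YU \mid Z) \;\ge\; g(U \mid Z) \;\ge\; 0.
\]
Since the leftmost term is zero by the common-information hypothesis, $g(U\mid Z) = 0$, hence $g(ZU) = g(Z)$ and the desired equality $(g|U)(X \lon Y \mid Z) = g(XU \lon YU \mid Z) = 0$ follows. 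Combining with the earlier two checks completes the verification that $(\grse Z \sss U,(g|U))$ is a $\cip$ extension of $(\grse \sss U,(f|U))$ for $(X,Y)$.
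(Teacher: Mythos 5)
Your proof is correct and follows essentially the same route as the paper: both arguments reduce everything to the single nontrivial fact $g(U\mid Z)=0$, after which the remaining conditions are routine cancellations of the conditioning set $U$. The only difference is that the paper obtains $g(U\mid Z)=0$ by citing Lemma~\ref{st:ciprop1} (applied to $U$ and the pair $(XU,YU)$, for which $g(U\mid XU)=g(U\mid YU)=0$ holds trivially), whereas you re-derive exactly that special case inline from submodularity and monotonicity.
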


\begin{proof}
Observe that the minor $(\grse Z \sss U, (g|U))$
of  $(\grse Z, g)$ is an extension of 
$(\grse \sss U, (f|U))$.
Since $g(U|Z) = 0$ 
by Lemma~\ref{st:ciprop1}, 
one can check with
a straightforward calculation that
$(g|U)(X \lon Y | Z) = g(XU \lon YU | Z) = 0$. 
Finally, 
$(g|U)(Z|X) = g(Z| XU) = 0$
and, analogously, $(g|U)(Z|Y) = 0$.
\end{proof}

\begin{proposition}\label{st:akmin}
Consider a polymatroid $(\grse,f)$ and sets 
$U \subseteq \grse$ and 
$X, Y \subseteq \grse\sss U$.
Let $(\grse Z, g)$ be an {\akp} extension of $(\grse,f)$
for the pair $(XU,YU)$.
Then $(\grse Z \sss U, (g|U))$ 
is an {\akp} extension of 
$(\grse\sss U, (f|U))$ for the pair $(X,Y)$.
\end{proposition}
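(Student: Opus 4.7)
The plan is to unfold the two defining conditions of an $\akp$ extension for the contracted polymatroid and reduce each of them to the corresponding property of the ambient extension $(\grse Z,g)$. The key identity is the standard one for contraction,
\[
(g|U)(A\,|\,B) = g(A\,|\,BU),
\]
valid for disjoint $A,B \subseteq \grse Z \sss U$, which follows immediately from $(g|U)(S) = g(SU) - g(U)$.

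Using this, the first $\akp$ condition $(g|U)(Z\,|\,X) = 0$ becomes $g(Z\,|\,XU) = 0$, which is precisely the first defining condition of the hypothesized extension $(\grse Z,g)$ for the pair $(XU,YU)$.

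For the second condition, I would fix $X' \subseteq X$ and rewrite $(g|U)(X'\,|\,Z) = g(X'\,|\,ZU)$ and $(g|U)(X'\,|\,Y) = g(X'\,|\,YU)$. Since $X' \subseteq XU$, the second defining condition of the ambient $\akp$ extension gives $g(X'\,|\,Z) = g(X'\,|\,YU)$. It therefore suffices to identify $g(X'\,|\,ZU)$ with $g(X'\,|\,Z)$, which in turn reduces to showing $g(U\,|\,X'Z) = g(U\,|\,Z)$. The crux is to establish $g(U\,|\,Z) = 0$; this follows by applying the second defining condition of the ambient $\akp$ extension to the subset $U$ itself (a legitimate choice since $U \subseteq XU$), which yields $g(U\,|\,Z) = g(U\,|\,YU) = 0$. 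Submodularity then forces $g(U\,|\,X'Z) = 0$ as well, closing the argument.

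The proof is essentially bookkeeping and I do not anticipate a substantive obstacle. The only point requiring a moment's care is that the second $\akp$ condition is stated for arbitrary subsets of the first argument ($XU$ in our setting), which is what allows us to specialize to $X' = U$ in order to pin down $g(U\,|\,Z)$; once that value is $0$, submodularity and the contraction identity make the two $\akp$ conditions for $(g|U)$ fall out directly from those for $g$.
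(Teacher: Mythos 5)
Your proposal is correct and follows essentially the same route as the paper's proof: both reduce the first $\akp$ condition directly via the contraction identity, and both handle the second condition by specializing the ambient $\akp$ condition to the subset $U \subseteq XU$ to obtain $g(U|Z) = g(U|YU) = 0$, whence $g(X'|ZU) = g(X'|Z) = g(X'|YU)$. The paper's version is merely terser, leaving the submodularity step from $g(U|Z)=0$ to $g(U|X'Z)=0$ implicit.
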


\begin{proof}
On one hand,
$(g|U)(Z |X) = g(Z|X U) = 0$.
On the other hand, 
$g(U|Z) = g(U | YU) = 0$, and hence
\[
(g|U)(X'| Z) = g(X' | ZU) = g(X'|Z) = 
g(X'| YU) = (g|U) (X'|Y)
\]
for every $X' \subseteq X$.
\end{proof}


The extension properties we discussed in this section
are not useful for matroids of rank~$3$, 
because all of them satisfy the
common information property
and the Ahlswede-K\"orner property.
A proof for this well-known fact 
can be found in~\cite[Proposition~3.18]{BBFP20}.
Nevertheless, there are matroids of rank $3$ that are not
almost entropic~\cite{Mat99} as, for example, the non-Desargues matroid.

Summarizing, we have presented in this section
six estension properties.
Three of them apply to matroids,
namely $\gep$, $\cep$ and $\imp$.
The first two, which are equivalent, 
are satisfied by the class of folded linear matroids,
while the third one is satisfied by all algebraic matroids. 
Every $\gep$ matroid is $\imp$ too.
The other three, which are $\cip$, $\akp$, and $\clp$,
apply to polymatroids.
Only the first two are extensively discussed in this paper.
The $\cip$ property is closely related $\cep$
and it is satisfied by all linear polymatroids.
Every $\cip$ polymatroid is $\akp$ as well.
All almost entropic polymatroids
satisfy the $\akp$ and $\clp$ properties.

\section{Matroids on Eight Elements}
\label{sec:8pointsRevisited}

In this section, we discuss the
extension properties  of the matroids
of rank $4$ on $8$ elements
that are not linearly representable.
All of them are sparse paving matroids.
We review the known facts and
we present a new result about
the relaxations of the matroid $P_8$
that are not folded linear.
Namely, we prove that they
are not $\cip$ matroids.
The proof is based on computer explorations, 
but we prove in a human readable way that
two of them do not satisfy the
generalized Euclidean property.

We identify the ground set 
$\grse = \{0,1, \ldots, 7\}$ of those matroids
with the set of vertices $(x,y,z) \in \{0,1\}^3$
of a $3$-dimensional cube.
Specifically, each element in $\grse$ is identified
with the vertex corresponding to its binary representation.
For instance,  $2$ is identified to $(0,1,0)$
and $6$ to $(1,1,0)$.
The vertices of that cube can be identified with
the eight points in the affine space 
of dimension~$3$ over the binary field $\field_2$.
The circuit-hyperplanes of the sparse paving matroid $AG(3,2)$
are the  $14$ planes in that space.
Clearly, $AG(3,2)$ is a maximal sparse
paving matroid.
It is $\kos$-linear if and only if
$\kos$ has characteristic $2$~\cite{Oxley11}.

We begin with a very simple and
well known application of extension properties,
namely proving that the V\'amos matroid
is not almost entropic.
The V\'amos matroid $\vamos = (\grse,f)$ 
is the relaxation of $AG(3,2)$
whose five circuit-hyperplanes are
\[
0123,0145,2367,4567,2345
\]
Take $L_0 = 01$, $L_1 = 23$, $L_2 = 45$, and $L_3 = 67$
and consider the polymatroid $\vamosp = (\grse_o,h)$ with
ground set $\grse_o = \{L_0,L_1,L_2,L_3\}$
and rank function $h$ such that the rank of 
each singleton equals $2$, the sets with two elements
have rank $3$ except for $h(L_0 L_3) = 4$,
while the rank of all larger sets is equal to $4$.
That is, the rank function of $\vamosp$ is the one
induced by the one of $\vamos$.

\begin{lemma}
\label{st:vamosak}
The polymatroid $\vamosp$ is not $1$-{\akp},
and hence it does not satisfy the Ahlswede-K\"orner property. 
\end{lemma}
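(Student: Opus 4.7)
The plan is to assume toward contradiction that $\vamosp$ satisfies the {\akp} property and to derive an inconsistency by iterating two extensions along the two Ingleton-Main triples hidden in the V\'amos configuration. The key observation is that both $(L_0,L_1,L_2)$ and $(L_1,L_2,L_3)$ satisfy the hypotheses of Proposition~\ref{st:IMvsAK}: each $L_i$ has rank $2$, each pair inside a triple has rank $3$, and each triple has rank $4$. By contrast, $(L_0,L_3)$ is the only pair with $h(L_0 \lon L_3) = 0$, and this extreme non-modularity is what will produce the contradiction.

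First I would apply {\akp} to the pair $(L_1 L_2, L_0)$ to obtain an extension with a new element $z_1$. Proposition~\ref{st:civsak} gives $g(z_1) = h(L_1 L_2 \lon L_0) = 1$, and Proposition~\ref{st:IMvsAK} gives $g(z_1 \mid L_i) = 0$ for $i = 0, 1, 2$. Since the extension leaves the rank function on $\grse_o$ unchanged, the triple $(L_1, L_2, L_3)$ remains Ingleton-Main in the extended polymatroid. I would then iterate, applying {\akp} to $(L_1 L_2, L_3)$ to obtain $z_2$ with $g(z_2) = 1$ and $g(z_2 \mid L_i) = 0$ for $i = 1, 2, 3$.

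Now submodularity gives $g(z_1 z_2 \mid L_j) \le g(z_1 \mid L_j) + g(z_2 \mid L_j) = 0$ for $j = 1, 2$, so Lemma~\ref{st:ciprop1} yields $g(z_1 z_2) \le h(L_1 \lon L_2) = 1$; combined with $g(z_1 z_2) \ge g(z_1) = 1$ this forces $g(z_1 z_2) = 1$, and hence $g(z_1 \mid z_2) = 0$. Combining this with $g(z_2 \mid L_3) = 0$ through submodularity gives $g(z_1 z_2 \mid L_3) = 0$, so $g(z_1 \mid L_3) = 0$. A final application of Lemma~\ref{st:ciprop1} to $U = z_1$, $X = L_0$, $Y = L_3$ then bounds $g(z_1) \le h(L_0 \lon L_3) = 0$, contradicting $g(z_1) = 1$.

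The main obstacle is conceptual rather than computational: one has to identify the two Ingleton-Main triples that share the edge $L_1 L_2$ and realize that the two common-information witnesses arising from them must collapse rank-wise into the unique rank-$1$ ``intersection'' of the closures of $L_1$ and $L_2$. Once this coincidence is spotted, it transfers via Lemma~\ref{st:ciprop1} to the pair $(L_0, L_3)$ whose extreme non-modularity closes the argument.
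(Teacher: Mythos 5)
Your proof is correct, but it takes a genuinely different route from the paper's. The paper disposes of the lemma with a \emph{single} {\akp} extension for the pair $(L_1 L_2, L_0)$: after obtaining $g(z|L_i)=0$ for $i=0,1,2$ from Proposition~\ref{st:IMvsAK}, it gets $g(z|L_3)=0$ for free by observing that $L_3$ is already a common information for the pair $(L_3 L_1, L_3 L_2)$ \emph{inside $\vamosp$ itself} (since $h(L_3 L_1 \lon L_3 L_2 | L_3)=3+3-4-2=0$) and invoking the ``moreover'' clause of Lemma~\ref{st:ciprop1} with $U=z$. You instead manufacture $g(z_1|L_3)=0$ by performing a second {\akp} extension along the other Ingleton--Main triple $(L_1,L_2,L_3)$ and showing that the two witnesses $z_1,z_2$ collapse rank-wise; each of your steps (the submodularity bounds, $g(z_1 z_2)=1$, the transfer to $L_3$, and the final contradiction via $h(L_0\lon L_3)=0$) checks out, and the iteration is legitimate because the assumed {\akp} property gives $2$-{\akp} in the sense of Definition~\ref{def:kci}. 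The trade-off: the paper's argument is shorter and proves the quantitatively stronger fact that $\vamosp$ is not even $1$-{\akp} --- i.e.\ no single {\akp} extension for $(L_1 L_2, L_0)$ exists --- which is exactly what the text following the lemma uses to conclude that the V\'amos matroid is not almost entropic; your argument only rules out depth $2$, so it proves the lemma as stated but not that stronger corollary. On the other hand, your two-triple symmetry makes the mechanism (two forced common informations that must coincide, crashing into the fully non-modular pair $(L_0,L_3)$) quite transparent, and it is essentially the template the paper itself reuses in the proof of Proposition~\ref{st:tttdnoak}.
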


\begin{proof}
Assume that there exists an {\akp} extension
$(\grse_o Z,g)$ for the pair $(L_1 L_2,L_0)$.
Then $g(Z|L_i) = 0$ for each $i = 0,1,2$
by Proposition~\ref{st:IMvsAK}.
Moreover, $g(Z|L_3) = 0$ by Lemma~\ref{st:ciprop1}
because $L_3$ is a common information for the 
pair $(L_3 L_1,L_3 L_2)$ and
$g(Z|L_3 L_1) = g(Z|L_3 L_2) = 0$.
Finally, applying Lemma~\ref{st:ciprop1} again,
$1 = g(Z) \le g(L_0 \lon L_3) = 0$, a contradiction. 
\end{proof}

As a consequence of the previous proof,
there is no {\akp} extension of the V\'amos matroid 
for the pair $(L_1 L_2,L_0)$.
Therefore, the V\'amos matroid is not almost entropic, and
hence is not algebraic.
The same arguments apply to every matroid
presenting the configuration
of the circuit-hyperplanes of the V\'amos matroid. 
Specifically, a matroid $(\grse,f)$ contains the 
\emph{V\'amos configuration} if there are
four lines $(L_i)_{0 \le i \le 3}$
such that the polymatroid induced
by $(\grse,f)$ on $\grse_o = \{L_0,L_1,L_2,L_3\}$
coincides with $\vamosp$.
Up to isomorphism, there are exactly $39$ matroids 
on eight elements containing the V\'amos configuration,
which are precisely the matroids on eight elements
that do not satisfy the Ingleton inequality~\cite{MaRo08}. 
All of them are sparse paving and relaxations of $AG(3,2)$.

Another relaxation of $AG(3,2)$ is the sparse-paving matroid
$L_8$, whose circuit-hyperplanes are the six faces of the cube and
the two twisted planes $0356$, $1247$ of $AG(3,2)$.
It is $\kos$-linear if and only if $|\kos| \ge 5$~\cite{Oxley11}.
The sparse paving matroid $L'_8$ that is obtained
from $L_8$ by relaxing one of the twisted planes 
is not linear, but it is folded linear~\cite{BBFP20}.
Moreover, it is algebraic over all fields with positive 
characteristic~\cite[Example~35]{BCD18}.

\begin{figure}
\centering
\begin{tikzpicture}[scale=1.5]

\filldraw[black]  (1,1) circle (2pt) ;
\node at (1.2,1.2) {\large 6};

\filldraw[black]  (-1,1) circle (2pt);
\node at (-1.2,1.2) {\large 2};

\filldraw[black]  (-1,-1) circle (2pt);
\node at (-1.2,-1.2) {\large 0};

\filldraw[black]  (1,-1) circle (2pt);
\node at (1.2,-1.2) {\large 4};

\filldraw[black]  (1.41,0) circle (2pt);
\node at (1.61,0) {\large 5};

\filldraw[black]  (0,1.41) circle (2pt);
\node at (0,1.61) {\large 7};

\filldraw[black]  (-1.41,0) circle (2pt);
\node at (-1.61,0) {\large 3};

\filldraw[black]  (0,-1.41) circle (2pt);
\node at (0,-1.61) {\large 1};

\draw[thick] (0,1.41) -- (1.41,0);
\draw[thick] (0,1.41) -- (-1.41,0);
\draw[thick] (0,-1.41) -- (1.41,0);
\draw[thick] (0,-1.41) -- (-1.41,0);

\draw[thick] (1,1) -- (1,0.41);
\draw[thick,dashed] (1,0.41) -- (1,-0.41);
\draw[thick] (1,-1) -- (1,-0.41);

\draw[thick] (-1,1) -- (-1,0.41);
\draw[thick,dashed] (-1,0.41) -- (-1,-0.41);
\draw[thick] (-1,-1) -- (-1,-0.41);

\draw[thick] (1,1) -- (0.41,1);
\draw[thick,dashed] (0.41,1) -- (-0.41,1);
\draw[thick] (-1,1) -- (-0.41,1);

\draw[thick] (1,-1) -- (0.41,-1);
\draw[thick,dashed] (0.41,-1) -- (-0.41,-1);
\draw[thick] (-1,-1) -- (-0.41,-1);

\draw[thick] (1,1) -- (0.41,1);
\draw[thick,dashed] (0.41,1) -- (-0.41,1);
\draw[thick] (-1,1) -- (-0.41,1);

%
%

\end{tikzpicture}
\caption{Geometric representation of $P_8$}
\label{fg:cubep8}
\end{figure}

The ten circuit-hyperplanes of the sparse
paving matroid $P_8$ are
\[
0246, 1357, 0217, 4617, 2635, 0435, 
0637, 0615, 2413, 2457
\] 
Following~\cite{GOVW00}, we consider the
geometric representation of 
$P_8$ that is obtained
by rotating half right angle 
the upper plane $1357$ of the cube, 
as in Figure~\ref{fg:cubep8}.
It is not difficult to check that $P_8$
is a maximal sparse paving matroid.
By relaxing $2635$ from $P_8$ we obtain
the sparse paving matroid $\pone$.
By relaxing $2635$ and $1357$, we obtain $\ptwo$ while
the relaxation of $2635$ and $0246$ produces the matroid $\ptwop$.
Finally, $\pthree$ is the result of 
relaxing those three circuit-hyperplanes.
Observe that $P_8$, $\pone$ and $\pthree$
are self-dual (but not identically self-dual)
while the dual of $\ptwo$ is isomorphic to $\ptwop$.
The matroid $P_8$ is $\kos$-linear
if and only if the characteristic of $\kos$ 
is different from $2$~\cite{Oxley11}. 
By a generalization of the method described 
in~\cite[Section~6.4]{Oxley11}, 
it was proved in~\cite{BBFP20}
that $\pthree$ is folded linear but not linear
while neither $\pone$, $\ptwo$, nor $\ptwop$ are folded linear.
Moreover, $\pthree$ is algebraic
over all fields with positive 
characteristic~\cite{Lindstrom1986}.
It is not known whether $\pone$, $\ptwo$ and $\ptwop$
are algebraic, almost entropic, or neither.

By computer-aided explorations, 
we checked that those three matroids are not $4$-\cip.  
The pairs $(X_i, Y_i)$ for which those common information extensions are
not possible are shown in Table~\ref{tab:4CI_combs}.

\begin{table}[ht]
\centering
\begin{tabular}{@{}ccccc@{}}
\toprule
Matroid & $(X_1,Y_1)$  & $(X_2,Y_2)$  & $(X_3,Y_3)$  & $(X_4,Y_4)$  \\ \midrule
$\pone$   & $01$, $56$ & $17$, $35$ & $67$, $03$ & $13$, $57$ \\
$\ptwo$  & $01$, $56$ & $17$, $35$ & $67$, $03$ & $13$, $57$ \\
$\ptwop$ & $01$, $27$ & $06$, $24$ & $67$, $14$ & $04$, $35$ \\ \bottomrule
\end{tabular}
\caption{Pairs of sets for which iterated $\cip$ extensions at depth 4 do not exist
for the matroids $\pone$, $\ptwo$, and $\ptwop$}
\label{tab:4CI_combs}
\end{table}

We conclude this section with 
a human readable proof
for the fact that the
matroids  $\pone$ and $\ptwo$ do not satisfy the
generalized Euclidean property. 
Let $(\grse,f)$ be one of the matroids $\pone$ or $\ptwo$ and
let $(\grse z_1 z_2 z_3, g)$ be a triple {\gep} extension
of  $(\grse,f)$, where $z_1, z_2, z_3$ correspond to
the non-modular pairs of lines 
$(16, 47),(12, 07),(06,24)$, respectively.
Since $z_1$ is on the line $47$ and $z_2$ is
on the line $07$, the line $z_1 z_2$ is on the
plane $047$, and hence the lines $04$ and $z_1 z_2$ are coplanar.
Analogously, the lines $26$ and $z_1 z_2$ are coplanar.
Observe that $z_1$ is in the intersection 
of the planes $0615$ and $2457$,
which is equal to the line $z_3 5$.
Moreover, the line $z_3 3$ is the intersection of the planes
$2413$ and $0673$, which contains $z_2$.
It follows that the lines $z_1 z_2$ and $35$ are coplanar.
Therefore, the lines  $26$, $04$, $z_1 z_2$ and $35$
form the V\'amos configuration, which implies that 
$(\grse z_1 z_2 z_3,g)$ is not $1$-{\gep}.
We can conclude that $(\grse,f)$ is not a {\gep} matroid.


\section{Matroids with the
Tic-Tac-Toe Configuration}\label{sec:TTTmats}

The tic-tac-toe matroid
was introduced as a possible counterexample
to prove that the class of algebraic matroids is
not closed by duality.
It is a sparse paving matroid of rank five
on nine elements with eight circuit-hyperplanes.
It satisfies the Ingleton inequality but it is not a 
{$\gep$} matroid~\cite{AlHo95}.
While it satisfies the Dress-Lov\'asz property~\cite{Bollen18},
it is not known whether it is algebraic or not.
Nevertheless, the dual of  the tic-tac-toe matroid is not
an {\imp} matroid~\cite{Hoc97}, and hence it is not algebraic.

We prove in the following that the tic-tac-toe matroid is not {\cip}
and its dual is not {\akp}, 
and hence the former is not folded linear and
the latter is not almost entropic. 
That applies as well to the sparse paving matroids 
of rank five on nine elements that contain
eight circuit-hyperplanes in the same configuration
as the tic-tac-toe matroid.
By a computer-aided exploration on 
the database~\cite{RoMaDatabase},
we found that there are exactly $181$ such matroids\confv{ (see~\cite{BFP23})}\fullv{, which are listed in the Appendix}. 
Once they were determined, we realized that their circuit-hyperplanes
can be described in terms of the points and lines on
the affine plane over $\field_3$.
Moreover, we identified two maximal sparse paving matroids 
whose relaxations contain all of them.


Let $\grse = \field_3 \times \field_3$ be the set of points 
$(x,y)$ on the affine plane over the field $\field_3$.
The 12 lines on that affine plane,
which are represented
in Figure~\ref{fg:apf33}, are partitioned into
four sets of three parallel lines.
Namely, for $i \in \field_3$,
\begin{itemize}
\item
the lines $A_i$ with equation $y = i$
\item
the lines $B_i$ with equation $x = i$,
\item
the lines $C_i$ with equation $x-y = i$,
\item
the lines $D_i$ with equation $x+y = i$.
\end{itemize}
We describe in the following several
sparse paving matroids of rank $5$ with ground set $\grse$
that are related to the 
tic-tac-toe matroid.
The $9$ circuit-hyperplanes
of the matroid $\tto$ are the sets
$A_i B_j$ with $(i,j) \in \field_3 \times \field_3$.
The \emph{tic-tac-toe matroid}, which is denoted here by $\ttt$, 
is obtained from $\tto$ 
by relaxing the circuit-hyperplane $A_0 B_0$.
Observe that the relaxation of
any other circuit-hyperplane instead
of $A_0 B_0$ produces an isomorphic matroid.

\begin{figure}
\centering
\begin{tikzpicture}[scale=1]

\filldraw[black]  (0,0) circle (2pt);
\filldraw[black]  (0,1) circle (2pt);
\filldraw[black]  (0,2) circle (2pt);

\filldraw[black]  (1,0) circle (2pt);
\filldraw[black]  (1,1) circle (2pt);
\filldraw[black]  (1,2) circle (2pt);

\filldraw[black]  (2,0) circle (2pt);
\filldraw[black]  (2,1) circle (2pt);
\filldraw[black]  (2,2) circle (2pt);

\draw (0,0) -- (0,2);
\draw (1,0) -- (1,2);
\draw (2,0) -- (2,2);

\draw (0,0) -- (2,0);
\draw (0,1) -- (2,1);
\draw (0,2) -- (2,2);

%
%
%

\draw (0,0) -- (2,2);
\draw (0,1) -- (1,2);
\draw (1,0) -- (2,1);

\draw (1,2) .. controls  (2,3) and (4,3) .. (2,0);
\draw (1,0) .. controls  (0,-1) and (-2,-1) .. (0,2);

\draw (0,2) -- (2,0);
\draw (0,1) -- (1,0);
\draw (1,2) -- (2,1);

\draw (2,1) .. controls  (3,0) and (3,-2) .. (0,0);
\draw (0,1) .. controls  (-1,2) and (-1,4) .. (2,2);

\end{tikzpicture}
\caption{The affine plane over $\field_3$}
\label{fg:apf33}
\end{figure}

Alfter and Hochst\"attler~\cite{AlHo95} proved that
the tic-tac-toe matroid $\ttt$ does not
satisfy the generalized Euclidean property.
We prove next that 
it does not satisfy the common information property.
As a consequence, $\ttt$ is not folded linear.

\begin{proposition}
\label{st:tttnoci}
The tic-tac-toe matroid $\ttt$ does not satisfy 
the common information property. 
\end{proposition}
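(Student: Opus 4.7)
The plan is to argue by contradiction: assuming $\ttt = (\grse, f)$ is a CI polymatroid, apply a bounded sequence of CI extensions and then exhibit, in the resulting polymatroid, four rank-$2$ flats whose induced rank function matches that of the Vámos polymatroid $\vamosp$. Since $\vamosp$ was shown in Lemma~\ref{st:vamosak} not to satisfy Ahlswede-Körner, and hence not to satisfy CI, this would yield the desired contradiction.

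First I would locate the non-modular pairs of rank-$3$ flats in $\ttt$. These are precisely the pairs $(A_i, B_j)$ with $(i,j) \neq (0,0)$: for those, $A_i B_j$ is a circuit-hyperplane of rank $4$, so that $f(A_i) + f(B_j) - f(A_i B_j) - f(A_i \cap B_j) = 6 - 4 - 1 = 1$. A CI extension for such a pair adjoins, by Proposition~\ref{st:civsak}, a set of rank $g(A_i) + g(B_j) - g(A_i B_j) = 2$ lying in the closures of both $A_i$ and $B_j$.

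Second, I would iteratively take CI extensions for the four pairs indexed by $(i,j) \in \{(1,1), (1,2), (2,1), (2,2)\}$, producing rank-$2$ flats $Z_{11}, Z_{12}, Z_{21}, Z_{22}$; by the assumed CI property the resulting polymatroid is CI at every stage. Any two $Z_{ij}$ sharing the first index both lie in the rank-$3$ closure of the common $A_i$, and any two sharing the second index both lie in the rank-$3$ closure of the common $B_j$. Submodularity then forces the four pairwise joins $(Z_{11}, Z_{12})$, $(Z_{11}, Z_{21})$, $(Z_{12}, Z_{22})$, and $(Z_{21}, Z_{22})$ to have rank exactly $3$.

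Third, I would analyze the two remaining diagonal pairs $(Z_{11}, Z_{22})$ and $(Z_{12}, Z_{21})$ and show, via submodularity together with the circuit-hyperplane structure of $\ttt$, that one has rank $3$ and the other rank $4$. The rank-$4$ conclusion must be forced by the fact that $A_0 B_0$ is a basis of $\ttt$ and not a circuit-hyperplane; otherwise, the combined incidences of the $Z_{ij}$'s would implicitly recover $A_0 B_0$ as a rank-$4$ flat, contradicting the relaxation that distinguishes $\ttt$ from $\tto$. Labelling so that $(L_0, L_3)$ is the non-coplanar pair, the four flats then induce exactly $\vamosp$.

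Finally, a CI extension of the ambient polymatroid for the pair $(L_1 L_2, L_0)$ would restrict, via the induced-polymatroid construction, to a CI extension for the corresponding pair in $\vamosp$, which by the proof of Lemma~\ref{st:vamosak} does not exist. Hence the ambient polymatroid is not CI, contradicting the assumption that iterated CI extensions of $\ttt$ remain CI. The main obstacle is the third step, namely forcing exactly one rank-$4$ pair among the four flats: the four rank-$3$ coplanarities follow immediately from the CI definitions and submodularity, but pinning down the diagonal ranks requires a careful inequality chain along the sequence of CI extensions that uses the absence of $A_0 B_0$ as a circuit-hyperplane of $\ttt$, and may require an additional CI extension to carry a common-information element onto the diagonal so that the rank-$4$ pair can be exhibited directly.
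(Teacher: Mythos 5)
Your overall strategy differs from the paper's, and it contains a genuine gap at exactly the step you flag as the ``main obstacle''; that step is not a technicality but the whole difficulty, and there are concrete reasons to doubt it can be completed as envisioned. First, a repairable issue: the union $W=Z_{11}Z_{12}Z_{21}Z_{22}$ of your four flats spans $A_1A_2B_1B_2$ (since $Z_{11}Z_{12}$ spans $A_1$, etc.), so $g(W)=5$, whereas in $\vamosp$ the four lines together have rank $4$; the induced polymatroid is therefore never literally $\vamosp$, and you would have to check that the proof of Lemma~\ref{st:vamosak} only uses ranks of proper subsets (it does). Second, and fatally for the sketch as written: submodularity applied to the two diagonals gives $g(Z_{11}Z_{22})+g(Z_{12}Z_{21})\ge g(W)=5$, so \emph{at least} one diagonal has rank $4$ --- but you also need the \emph{other} diagonal to have rank $3$, and nothing in your argument forces this. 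Indeed it cannot be forced by the data you use: your construction only references $A_1,A_2,B_1,B_2\subseteq \grse\sss(0,0)$, and the deletions $\ttt\sss(0,0)$ and $\tto\sss(0,0)$ coincide; since $\tto$ is linearly representable (hence $\cip$), any argument confined to these sets would equally ``prove'' that $\tto$ is not $\cip$, which is false. (In a linear representation of $\tto$ the canonical intersections give \emph{both} diagonals rank $4$, which is consistent with all your constraints and yields no Vámos configuration.) So the missing step must bring in the point $(0,0)$ and the fact that $A_0B_0$ is a basis of $\ttt$ in an essential, quantitative way, and your appeal to ``the combined incidences would implicitly recover $A_0B_0$'' does not supply that mechanism.

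The paper's proof is shorter and takes a different route that does exactly this. It uses a single {\cip} extension $(\grse z,g)$ for the pair of \emph{parallel} lines $(A_{-1},A_1)$, so $g(z)=1$. Since each $B_i$ is a common information for $(B_iA_{-1},B_iA_1)$, Lemma~\ref{st:ciprop1} gives $g(z|B_i)=0$ for all $i$; hence $z$ is a common information for $(B_{-1},B_1)$ and, by the symmetric argument, $g(z|A_0)=0$. Then, because $z_0=(0,0)$ is a common information for the modular pair $(A_0,B_0)$, Lemma~\ref{st:ciprop1} forces $g(z|z_0)=g(z_0|z)=0$, i.e.\ $z$ is equivalent to the point $(0,0)$; this contradicts $g(A_1z)=3\ne 4=g(A_1z_0)$. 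Note that the Vámos-configuration strategy you propose is essentially what the paper does use --- but for the \emph{dual} matroid $(\ttt)^*$ in Proposition~\ref{st:tttdnoak}, where the configuration can actually be pinned down. If you want to salvage your approach for $\ttt$ itself, you would need to supply the inequality chain tying the diagonal $Z_{ij}$'s to $(0,0)$; as it stands, the proposal does not prove the proposition.
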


\begin{proof}
Clearly, $Z_0 = \{(0,0)\}$ is a common information
for the pair $(A_0,B_0)$ in $\ttt$.
Let $(\grse Z,g)$ be a $\cip$ extension of $\ttt = (\grse,f)$
for the pair $(A_{-1}, A_1)$.
Since $B_i$ is a common information
for the pair $(B_i A_{-1},B_i A_1)$
and $g(Z|B_i A_{-1}) = g(Z|B_i A_{1}) = 0$
it follows by Lemma~\ref{st:ciprop1}
that $g(Z|B_i) = 0$ for each  $i \in \field_3$. 
In particular, $Z$ is a common information
for the pair $(B_{-1},B_1)$
and, by symmetry, $g(Z|A_0) = 0$.
Since $g(Z|A_0) = g(Z|B_0) = 0$, 
we infer that $g(Z|Z_0) = 0$
by Lemma~\ref{st:ciprop1}, and hence 
$g(Z_0|Z) = 0$ because $g(Z) = g(Z_0)$.
That is in contradiction to 
$g(A_1 Z) \ne g(A_1 Z_0)$.
\end{proof}


The dual $(\ttt)^*$ of the tic-tac-toe matroid is not algebraic
because it does not satisfy the 
Ingleton-Main property~\cite[Proposition 5]{Hoc97}.
By using Proposition~\ref{st:IMvsAK},
the proof of that result is easily adapted
to show that it does not satisfy the 
Ahlswede-K\"orner property. 
Therefore, $(\ttt)^*$ is not almost entropic. 

\begin{proposition}
\label{st:tttdnoak}
The dual $(\ttt)^*$ of the tic-tac-toe matroid
does not satisfy the Ahlswede-K\"orner property.
\end{proposition}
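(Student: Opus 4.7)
The plan is to adapt Hochst\"attler's argument~\cite{Hoc97}, which establishes that $(\ttt)^*$ fails Ingleton-Main, by replacing the hypothetical {\imp} extension with a hypothetical {\akp} extension and invoking Proposition~\ref{st:IMvsAK} to recover the Ingleton-Main conclusion automatically.

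First, I would locate in Hochst\"attler's proof the triple of lines $L_0, L_1, L_2$ of $(\ttt)^*$ that realizes an Ingleton-Main configuration, that is, with $f^*(L_i L_j) = 3$ for $i \neq j$ and $f^*(L_0 L_1 L_2) = 4$, for which no single-element extension $(\grse z, g)$ satisfying $g(z) = 1$ and $g(z | L_i) = 0$ for $i = 0, 1, 2$ can be a matroid on $\grse z$. The obstruction comes from the circuit-hyperplanes of $(\ttt)^*$, which are the complements of the sets $A_i B_j$ with $(i,j) \neq (0,0)$.

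Arguing by contradiction, I would then suppose $(\ttt)^*$ is an {\akp} matroid and take an {\akp} extension $(\grse z, g)$ for the pair $(L_1 L_2, L_0)$. By Proposition~\ref{st:civsak} we have $g(z) = g(L_1 L_2 \lon L_0) = 1$, and by Proposition~\ref{st:IMvsAK} we get $g(z | L_i) = 0$ for each $i = 0, 1, 2$. Thus $z$ satisfies exactly the rank equalities that characterize an Ingleton-Main extension for $(L_0, L_1, L_2)$, so Hochst\"attler's contradiction transfers verbatim and rules out the existence of such an {\akp} extension.

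The main obstacle I expect is the first step: checking that Hochst\"attler's derivation of impossibility uses only the conditions $g(z) = 1$ and $g(z | L_i) = 0$, together with the original matroid structure of $(\ttt)^*$, and does not rely on any further feature of the hypothetical {\imp} extension. Once this is verified, the proof is essentially a substitution, since Proposition~\ref{st:IMvsAK} delivers precisely the rank equalities his argument needs; combining this with Proposition~\ref{st:akmin} (preservation of {\akp} under minors) would also yield the analogous statement for the duals of the $181$ tic-tac-toe relaxations identified in Section~\ref{sec:TTTmats}.
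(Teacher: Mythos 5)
Your guiding idea is the right one, and it is the same one the paper uses: Proposition~\ref{st:IMvsAK} converts the Ahlswede-K\"orner hypotheses into the conclusion that the new element $z$ lies on all three lines, so Hochst\"attler's Ingleton-Main obstruction should become an Ahlswede-K\"orner obstruction. But your execution has a genuine gap, and it is exactly at the point you flag as ``the main obstacle.'' A \emph{single} {\akp} extension for one triple of lines of $(\ttt)^*$ does not produce a contradiction: the polymatroid in which one such triple meets at a point of rank one is perfectly consistent, and indeed the paper constructs it as an intermediate step. The obstruction only appears after \emph{two} such extensions. The paper takes the lines $L^{i}_{j} = \{(i,k),(i,\ell)\}$ and performs a double {\akp} extension, adding $z_{-1}$ for the triple $(L^{-1}_{-1},L^{0}_{-1},L^{1}_{-1})$ and $z_1$ for $(L^{-1}_{1},L^{0}_{1},L^{1}_{1})$; it then verifies that $L=\{z_{-1},z_1\}$ has $g(L)=2$ and $g(LL^{i}_{0})=3$, so that $\{L^{-1}_{0},L^{0}_{0},L,L^{1}_{0}\}$ induces a polymatroid isomorphic to $\vamosp$, and concludes with Lemma~\ref{st:vamosak}. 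So the failure is of $3$-{\akp}, not $1$-{\akp}, and your plan as written (``Hochst\"attler's contradiction transfers verbatim and rules out the existence of such an {\akp} extension'') would not close.

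The deeper reason a verbatim transfer is impossible is the one you yourself anticipate: an {\akp} extension need not be a matroid, whereas Hochst\"attler's contradiction is derived inside the class of matroids. The paper's remedy is to replace that matroid-level endgame with a purely polymatroid-level one, namely the reduction to the V\'amos polymatroid $\vamosp$ and Lemma~\ref{st:vamosak} (which rests only on Lemma~\ref{st:ciprop1} and Proposition~\ref{st:IMvsAK}). If you rewrite your proof with the double extension, the computation of $g(L)$ and $g(LL^{i}_{0})$, and the appeal to Lemma~\ref{st:vamosak}, it becomes the paper's proof. Your closing remark about combining with minor-closedness to cover the duals of the other TTT matroids is sound and matches what the paper does.
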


\begin{proof}
For each pair $(i,j)$ in $\field_3 \times \field_3$,
take $L^{i}_{j} = \{(i,k),(i,\ell)\}$
with $\{j,k,\ell \} = \field_3$.
Observe that $(\ttt)^*$ is a sparse paving matroid
of rank $4$.
Its circuit-hyperplanes are
the 8 sets $\grse \sss A_i B_j$ with 
$(i,j) \ne \{(0,0)\}$,
which coincide with
the sets of the form 
$L^i_j L^{i'}_j$ with $i \ne i'$
other than $L^{-1}_0 L^1_0$.
Let $(\grse Z_{-1} Z_1,g)$ be a double {\akp} extension of $(\ttt)^*$,
where $Z_{j}$ corresponds to the pair 
$(L^{-1}_{j} L^{1}_{j},L^{0}_{j})$.
Recall that, since we are dealing
with {\akp} extensions, 
$(\grse Z_{-1} Z_1,g)$ may not be a matroid.
By Propositions~\ref{st:civsak}
and~\ref{st:IMvsAK}, $g(Z_j) = 1$ and
$g(Z_j | L^{i}_{j}) = 0$ for each
$i \in \field_3$ and $j \in \field_3 \sss \{0\}$.
Take $L = \{Z_{-1}, Z_1\}$.
Since 
\(
g(Z_{-1} | L^{-1}_{-1}) =
g(Z_{1} | L^{1}_{1}) = 0
\) 
and 
\(
g(L^{1}_{1} | L^{-1}_{-1}) = g(L^{1}_{1})
\),
it is clear that $g(L) = 2$.
If $i \ne i'$, then
$g(L^{i}_0 | L^{i'}_1 ) = g(L^i_0)$,
and hence $L \ne L^{i}_0$ because
$g(Z_1 | L^{i'}_1) = 0$. 
In addition, both $L$ and $L^{i}_{0}$ are in the span of
$\{(i,-1),(i,0),(i,1)\}$, which implies that
$g(L L^{i}_{0}) = 3$ for each $i \in \field_3$.
Therefore, the polymatroid induced by
$(\grse Z_{-1} Z_1,g)$ on
\(
\{L^{-1}_{0},L^{0}_{0},L,L^{1}_{0}\}
\)
is isomorphic to $\vamosp$
and the proof is concluded  
by Lemma~\ref{st:vamosak}.
\end{proof}

The previous propositions can be applied 
as well to the sparse paving matroids
on nine elements containing 
circuit-hyperplanes in the same configuration
as $\ttt$ or $(\ttt)^*$.
That is, the TTT matroids that are 
described in Definition~\ref{def:tttkinds}
and their duals.
We skip the proofs of the next lemma and
other similar results in this section
because they are elementary 
but quite cumbersome.

\begin{lemma}
\label{st:cihyT31}
Let $M$ be a sparse paving matroid on $\grse$ 
with $\cihy(\tto) \subseteq \cihy(\Mat)$.
Then every circuit-hyperplane in
$\cihy(M) \sss \cihy(\tto)$ is of the form $C_i D_j$
for some $(i,j) \in \field_3 \times \field_3$.
\end{lemma}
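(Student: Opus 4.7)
The plan is a short combinatorial forcing argument driven by the sparse paving property. Fix any extra circuit-hyperplane $X \in \cihy(M) \sss \cihy(\tto)$. Since $M$ has rank $5$ and is sparse paving, $|X| = 5$, and since $X$ differs from every $A_i B_j \in \cihy(\tto) \subseteq \cihy(M)$, one has $|X \cap A_i B_j| \le 3$ for every $(i,j) \in \field_3 \times \field_3$.

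Set $a_i := |X \cap A_i|$ and $b_j := |X \cap B_j|$. Using $A_i \cap B_j = \{(j,i)\}$, inclusion--exclusion gives $|X \cap A_i B_j| = a_i + b_j - \epsilon_{ij}$, where $\epsilon_{ij} \in \{0,1\}$ indicates whether $(j,i) \in X$. The sparse paving bound therefore reads $a_i + b_j \le 3 + \epsilon_{ij} \le 4$. Together with $\sum_i a_i = \sum_j b_j = 5$, this forces $a_i, b_j \in \{0,1,2\}$ (otherwise some $a_i = 3$ would impose $b_j \le 1$ for every $j$, contradicting $\sum_j b_j = 5$), and hence both tuples $(a_0,a_1,a_2)$ and $(b_0,b_1,b_2)$ are permutations of $(1,2,2)$.

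Let $i_0$ and $j_0$ be the unique indices with $a_{i_0}=1$ and $b_{j_0}=1$. For each pair $(i,j)$ with $i \ne i_0$ and $j \ne j_0$ we have $a_i + b_j = 4$, so the equality case of the bound forces $(j,i) \in X$. These four points already saturate the rows $\ne i_0$ and the columns $\ne j_0$, so the missing fifth element of $X$ must lie simultaneously in row $i_0$ and in column $j_0$, that is, it must be $(j_0,i_0)$. Hence $X$ is completely determined by the pair $(j_0,i_0)$, and it consists of $(j_0,i_0)$ together with the four ``corner'' points $(j,i)$ with $i \ne i_0$, $j \ne j_0$.

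To finish, I would verify that this $X$ equals $C_i D_j$ for the choice $i = j_0 - i_0$, $j = j_0 + i_0$ (computed in $\field_3$). Using the parametrizations $C_i = \{(y+i,y) : y \in \field_3\}$ and $D_j = \{(j-y,y) : y \in \field_3\}$, a direct check shows that the unique common point of $C_i$ and $D_j$ occurs at $y = i_0$ and equals $(j_0,i_0)$, while in each of the two remaining rows $y \ne i_0$ the two lines contribute two distinct points, both with $x$-coordinate different from $j_0$. This reproduces the description of $X$ found above, so $X = C_i D_j$, as required. The only delicate step is this final matching between $(j_0,i_0)$ and the indices $(i,j)$, which is elementary modular arithmetic over $\field_3$ but must be done carefully; the remainder of the argument is a direct counting/forcing based on the inequality $a_i + b_j \le 4$.
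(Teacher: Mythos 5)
Your argument is correct. Note that the paper itself does not supply a proof of this lemma: it explicitly states that the proofs of this and similar results in Section~5 are ``elementary but quite cumbersome'' and skips them, so there is no written argument to compare against. Your counting proof fills that gap cleanly. The key steps all check out: since $X\in\cihy(M)\sss\cihy(\tto)$ is a circuit-hyperplane distinct from every $A_iB_j\in\cihy(M)$, the sparse paving condition gives $|X\cap A_iB_j|\le 3$, which with $|X\cap A_iB_j|=a_i+b_j-\epsilon_{ij}$ yields $a_i+b_j\le 4$; the exclusion of $a_i=3$ (and $b_j=3$) via $\sum_j b_j=5$ is sound, forcing both profiles to be permutations of $(1,2,2)$; the equality case $a_i+b_j=4$ then forces $\epsilon_{ij}=1$ for the four pairs with $i\ne i_0$, $j\ne j_0$, and the saturation of those rows and columns pins the fifth point to $(j_0,i_0)$. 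The final identification also works: solving $x-y=c$, $x+y=d$ over $\field_3$ at the point $(j_0,i_0)$ gives $c=j_0-i_0$ and $d=j_0+i_0$ as you state, and since each of $C_c$ and $D_d$ meets every row $y\ne i_0$ in a point with $x\ne j_0$, the five-point set $C_cD_d$ is contained in, hence equal to, $X$. The only presentational quibble is that you should state explicitly that $C_cD_d\subseteq X$ plus $|C_cD_d|=|X|=5$ gives equality; otherwise the proof is complete and arguably less cumbersome than the case analysis the authors had in mind.
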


Let $\ttc$ be the sparse paving matroid on $\grse$
whose 18 circuit-hyperplanes are 
the sets $A_i B_j$ and $C_i D_j$ with
$(i,j) \in \field_3 \times \field_3$.
By Lemma~\ref{st:cihyT31},
$\ttc$ is the only maximal sparse paving matroid 
that has $\tto$ as a relaxation.

\begin{lemma}
\label{st:cihyT3}
Let $M$ be a sparse paving matroid on $\grse$ 
such that $\cihy(\ttt) \subseteq \cihy(M)$.
Then there is a unique pair
$(i_o,j_o) \in \grse \sss A_0 B_0$
such that every circuit-hyperplane in
$\cihy(M) \sss \cihy(\ttt)$ is either in 
$\cihy(\ttc )$ or is equal to
$(A_0 B_0 \sss (0,0)) \cup (i_o,j_o)$.
\end{lemma}

Consider the sparse paving matroid $\ttw$
with the circuit-hyperplanes of $\ttt$ 
together with $(A_0 B_0 \sss (0,0)) \cup (1,1)$.
Taking another point $(i_0,j_o) \in \grse \sss A_0 B_0$
instead of $(1,1)$ yields a matroid 
isomorphic to $\ttw$.

\begin{definition}
\label{def:tttkinds}
Let $\Mat$ be a sparse paving matroid 
of rank $5$ on $\grse$ with 
$\cihy(\ttt) \subseteq \cihy(\Mat)$
and $A_o B_0 \notin \cihy(M)$.
If $\cihy(\Mat) \subseteq \cihy(\ttc)$, then $\Mat$ is
a \emph{TTT matroid of the first kind}, 
while  $\Mat$ is a 
\emph{TTT matroid of the second kind}
if  $(A_0 B_0 \sss (0,0)) \cup (1,1) \in \cihy(\Mat)$.
\end{definition}

As a consequence of Lemmas~\ref{st:cihyT31} 
and~\ref{st:cihyT3},
every sparse paving matroid
of rank five on nine elements that contains
eight circuit-hyperplanes in the same configuration
as the tic-tac-toe matroid is isomorphic to a TTT matroid,
either of the first kind or the second kind.

Neither $C_{-1} D_{-1}$ nor
$C_{1} D_{-1}$ can be circuit-hyperplanes of
a TTT matroid of the second kind.
Because of that,
there is only one maximal sparse paving matroid on $\grse$
that admits $\ttw$ as a relaxation, which we denote by $\twc$.
Its $16$ circuit-hyperplanes are
\begin{itemize}
\item
$A_i B_j$ with $(i,j) \ne (0,0)$,
\item
$(A_0 B_0 \sss (0,0)) \cup (1,1)$, and
\item
$C_i D_j$ with
$(i,j) \ne (-1,-1)$ and $(i,j) \ne (1,-1)$.
\end{itemize}
Every TTT matroid of the second kind is 
a relaxation of $\twc$
and every TTT matroid of the first kind
is a relaxation of $\ttc$.

By using the same arguments as in the proofs
of Propositions~\ref{st:tttnoci} and~\ref{st:tttdnoak},
no TTT matroid satisfies the common information property,
while the Ahlswede-K\"orner property is not satisfied by 
any dual of a TTT matroid. 

By applying the method described in~\cite[Section~6.4]{Oxley11},
one can check that 
the matroid $\ttc$ is linearly representable,
but only over fields of characteristic $3$.
A representation over $\field_3$ is given by the following matrix,
whose columns are indexed by
the elements in $\grse$ in the order
$(-1,-1), (-1,0), (-1,1), (0,-1)$,
$(0,0), (0,1), (1,-1), (1,0), (1,1)$.
\[
\left(
\begin{array}{rrrrrrrrr}
1 & 0 & 0 & 0 & 0 & 1 & 1 & 1 & 1 \\
0 & 1 & 0 & 0 & 0 & 1 & -1 & -1 & 1 \\
0 & 0 & 1 & 0 & 0 & 1 & -1 & 1 & 0 \\
0 & 0 & 0 & 1 & 0 & 1 & 1 & 0 & -1 \\
0 & 0 & 0 & 0 & 1 & 1 & 0 & -1 & -1
\end{array}
\right)
\]
The matroid $\tto$ is linear over fields of every characteristic.
A linear representation is given in~\cite[Section~5]{BBFP20}.

The TTT matroids of the second kind that contain
the circuit-hyperplanes 
\begin{equation}
\label{eq:VamTTT}
C_0 D_{-1}, C_0 D_0, C_0 D_1,  A_1 B_1,  
(A_0 B_0 \sss (0,0)) \cup (1,1)
\end{equation}
are not almost entropic.
Indeed, let $\Mat$ be such a matroid.
By removing $(1,1)$ from each of
the sets in~(\ref{eq:VamTTT}), we obtain 
five circuit-hyperplanes of the minor  of $\Mat$
that results from the contraction of $(1,1)$.
It is easy to check that they form a V\'amos configuration.
For every TTT matroid of the second kind in that situation,
the contraction of $(1,1)$ yields a sparse paving matroid 
of rank $4$ on $8$ elements whose
$9$ circuit-hyperplanes are
obtained by removing $(1,1)$ from each of the sets
\[
C_0 D_{-1}, C_0 D_0, C_0 D_1,  A_1 B_1,  
(A_0 B_0 \sss (0,0)) \cup (1,1),
A_1 B_{-1}, A_1 B_0, A_0 B_1, A_{-1} B_1
\]
and hence that minor is isomorphic
to the matroid with tag 1509 in
the database by Royle and Mayhew~\cite{RoMaDatabase}.
There are exactly $10$ TTT matroids 
of the second kind in that situation,
which are specified in\fullv{the Appendix}\confv{~\cite{BFP23}}.
The matroid $\twc$ is one of them.

The representability of matroids on nine elements
by Frobenius-flocks over 
fields of characteristic $2$ 
was exhaustively explored by Bollen~\cite{Bollen18}.
By matching the results of that exploration
with our list of TTT matroids, 
we found that at least 62 of them are not
Frobenius-flock representable in characteristic $2$, and 
hence they are not algebraic over fields of characteristic $2$.
Those TTT matroids have at least $12$ circuit-hyperplanes.
They are listed in the \fullv{Appendix}\confv{extended version of this work~\cite{BFP23}}.
In particular, for all fields of positive characteristic,
to determine whether the tic-tac-toe matroid $\ttt$ is algebraic or not
remains an open problem.

\section*{Acknowledgment}
\confv{We thank the anonymous reviewer for carefully revising
our paper and providing 
valuable suggestions for its improvement.}
\fullv{We are grateful to Guus P. Bollen for his assistance with the matroid database and for sharing his expertise. We also thank anonymous reviewers for carefully revising our paper and providing valuable suggestions for its improvement.}



\fullv{

\appendix

\section{Other Results of Computer-Aided Explorations}

The experimental results presented in this appendix
were obtained by using the 
Gurobi\textsuperscript{TM} optimizer 
for solving linear programming problems, 
and the SageMath matroid package for the 
recursive implementation of the intersection 
properties and other matroid operations. 
The programs we used are available at \cite{Bam22}.

\subsection{Identifying TTT Matroids in the Existing Databases}\label{s:listTTT}


We identified the TTT matroids 
introduced in Section~\ref{sec:TTTmats} in the databases
by Royle and Mayhew~\cite{RoMaDatabase}
and by Bollen~\cite{Bollen18-1}. 
In particular, we found out that,
up to isomorphism, there are 181 such matroids,
which are listed in Table~\ref{tab:sparse pav TTT mats}.
They are identified by the tags  
from~\cite{RoMaDatabase} and,
between brackets, the ones from~\cite{Bollen18-1}.
In that list, we specify
the $62$ matroids that are not
algebraic over fields of characteristic $2$ because
they are not Frobenius-flock representable in characteristic $2$,
and the $10$ matroids that are not 
almost entropic because they have
a minor with the V\'amos configuration.
To the best of our knowledge, 
no result is known on 
the existence of algebraic representations
for the other 109 TTT matroids.
In addition, we found out that
the minors of $P_8$
that are discussed in Section~\ref{sec:8pointsRevisited}
appear as minors of some TTT matroids. 
All these facts are presented in
Table~\ref{tab:sparse pav TTT mats},
in which all 181 TTT matroids are listed.

In Figure~\ref{fig:allTTTpics}, the 181
TTT matroids, which are labeled with
the tags used in~\cite{Bollen18-1},
are arranged in columns 
according to the number of circuit-hyperplanes,
decreasing from left to right.
For a better viewing of this graph, 
the interested reader is invited to check \cite{Bam22}.
The edges indicate relaxation of
one circuit-hyperplane.
The red nodes are those that have been 
determined to be non-algebraic
over fields of characteristic $2$~\cite{Bollen18}, 
green ones are neither algebraic nor almost entropic 
due to a minor with the V\'amos configuration, 
while the
algebraic status of the blue ones is undetermined.
The first column contains
the only relaxation of $\ttc$ (with tag 45969),
which has $17$ circuit hyperplanes.  
The second column consists of the two relaxations
of that matroid together with $\twc$ 
(the green vertex with tag 94190).
In the last column we find only
the tic-tac-toe matroid $\ttt$ (tag 185672)
and in the last but one column we find
the matroid $\ttw$ (tag 152191),
the one of the second kind with
the fewest circuit-hyperplanes,
and some TTT matroids of the first kind.
 

\begin{figure}[!htp]
    \centering
    \includegraphics[width=1\linewidth]{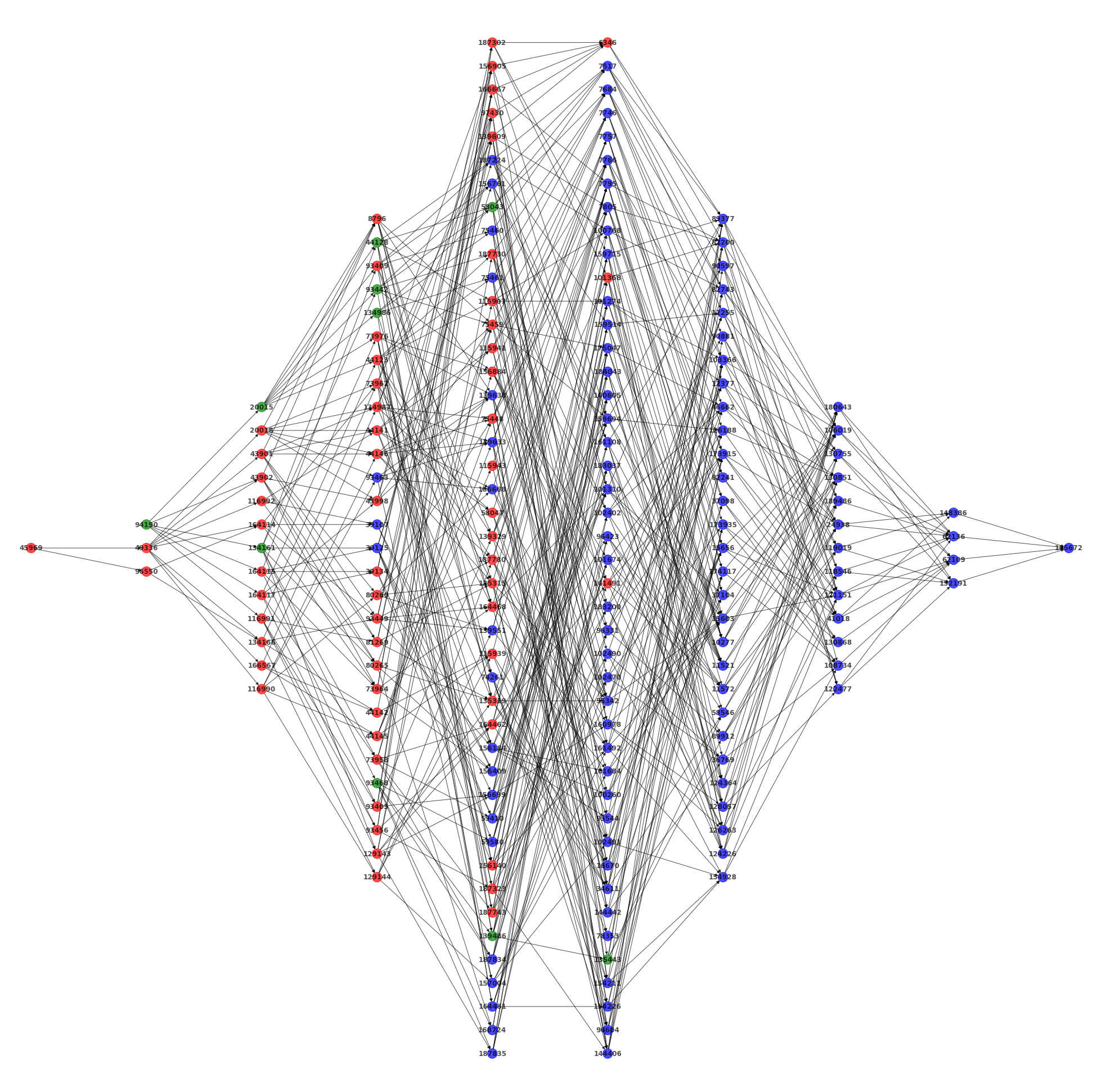}
    \caption{All TTT matroids on 9 points. The labels displayed are the Bollen tags~\cite{Bollen18-1} of these matroids}
    \label{fig:allTTTpics}
\end{figure}


\begin{longtable}[c]{@{}ccc@{}}
	\toprule
	Matroid &
	\begin{tabular}[l]{@{}c@{}}2-Algebraic\\ certificate\end{tabular} &
	Minor \\* \midrule
	\endfirsthead
	\multicolumn{3}{c}%
	{{\bfseries Table \thetable\ continued from previous page}} \\
	\toprule
	Matroid &
	\begin{tabular}[l]{@{}c@{}}2-Algebraic\\ certificate\end{tabular} &
	Minor \\* \midrule
	\endhead
	\bottomrule
	\endfoot
	\endlastfoot
& 17 circuit-hyperplanes  & \\   & & \\  
$264950$ $(45969)$  & 2-flock & $\pone$ 
\\  & & \\ \hline & 16 circuit-hyperplanes  & \\   & & \\
$\twc$ $264952$ $(94190)$  & I-M condition 
& $1509, \pone$ \\
$264955$ $(96550)$  & 2-flock & $\pone$ \\
$264956$ $(49336)$  & 2-flock & $\ptwo, \pone$ \\  
& & \\ \hline & 15 circuit-hyperplanes  & \\   & & \\
$264978$ $(164115)$ & 2-flock  & $\pone$ \\
$264984$ $(164117)$ & 2-flock  & $\ptwo, \pone$ \\
$264994$ $(20018)$  & 2-flock  & $\ptwo, \pone$  \\
$264999$ $(20015)$  & I-M condition &  $1509, \ptwo, \ptwop$ \\
$265008$ $(164114)$ & 2-flock  & $\ptwo, \pone$  \\
$265011$ $(134161)$ & I-M condition & $1509, \pone$ \\
$265012$ $(116992)$ & 2-flock & $\ptwo, \pone$ \\
$265014$ $(116990)$ & 2-flock  & $\pone$ \\
$265018$ $(116991)$ & 2-flock  & $\ptwo, \pone$ \\
$265020$ $(134166)$ & 2-flock  & $\ptwop, \pthree, \pone$ \\
$265023$ $(166567)$ & 2-flock & $\ptwop$ \\
$265026$ $(43901)$  & 2-flock  & $\ptwo, \ptwop, \pone$ \\
$265028$ $(43902)$  & 2-flock & $\pone$ \\
 & & \\ \hline & 14 circuit-hyperplanes  & \\   & & \\   
$265129$ $(129143)$ & 2-flock & $\ptwo, \pone$  \\
$265237$ $(93456)$  & 2-flock & $\ptwop, \pone$  \\
$265262$ $(73958)$  & 2-flock & $\ptwo, \ptwop, \pone$  \\
$265270$ $(39134)$  & 2-flock & $\ptwop, \pthree$ \\
$265389$ $(134987)$ & 2-flock & $\ptwop, \pone$  \\
$265409$ $(134986)$ & I-M condition  & $1509$ \\
$265420$ $(93442)$  & I-M condition  & $1509, \ptwo, \ptwop, \pone$ \\
$265421$ $(43998)$  & 2-flock  & $\ptwo, \pone$  \\
$265422$ $(73962)$  & 2-flock  & $\ptwo, \ptwop, \pthree, \pone$  \\
$265423$ $(44141)$  & 2-flock  & $\ptwo, \ptwop, \pone$ \\
$265424$ $(93463)$  & —    & $\ptwo$   \\
$265437$ $(44146)$  & 2-flock    & $\ptwop, \pone$  \\
$265451$ $(44128)$  & I-M condition    & $1509, \ptwo, \pone$ \\
$265465$ $(73976)$  & 2-flock & $\ptwop$ \\
$265468$ $(44123)$  & 2-flock & $\ptwop, \pone$ \\
$265547$ $(80265)$  & 2-flock  & $\pone$ \\
$265551$ $(80260)$  & 2-flock  & $\ptwop, \pone$  \\
$265552$ $(93449)$  & 2-flock  & $\pthree, \pone$   \\
$265553$ $(93405)$  & 2-flock  & $\pthree$  \\
$265555$ $(93409)$  & 2-flock  & $\pthree, \pone$  \\
$265556$ $(129144)$ & 2-flock  & $\ptwop, \pone$  \\
$265601$ $(39125)$  & —   & --- \\
$265602$ $(39107)$  & —   & $\ptwo$  \\
$265622$ $(44145)$  & 2-flock   & $\ptwop, \pone$  \\
$265623$ $(44142)$  & 2-flock & $\pone$      \\
$265695$ $(93468)$  & I-M condition & $1509$  \\
$265696$ $(81269)$  & 2-flock   & $\ptwo, \pone$  \\
$265715$ $(73964)$  & 2-flock   & $\pone$  \\
$265760$ $(8796)$   & 2-flock & $\ptwo, \ptwop$  \\  
& & \\ \hline & 13 circuit-hyperplanes  & \\   & & \\ 
$266399$ $(164462)$ & 2-flock  & $\pone$   \\ 
$266923$ $(187302)$ & 2-flock  & —   \\
$266948$ $(187323)$ & 2-flock  & $\ptwop$  \\
$267669$ $(75448)$  & 2-flock  & $\ptwop, \pone$    \\
$267671$ $(139633)$ & —                                                                 & $\ptwop$                                                                              \\
$267672$ $(139609)$ & 2-flock                                                            & $\pone$                                                                                \\
$267675$ $(58047)$  & 2-flock                                                            & $\pone$                                                                                \\
$267678$ $(97430)$  & 2-flock                                                            & $\ptwop, \pone$                                                                         \\
$267871$ $(75460)$  & —                                                                 & $\pone$                                                                                \\
$267873$ $(58043)$  & I-M condition   &  $1509, \pone$  \\
$267897$ $(164481)$ & —                                                                 & —                                                                                    \\
$267946$ $(75461)$  & —                                                                 & $\pone$                                                                                \\
$268016$ $(187730)$ & 2-flock                                                            & $\ptwop$                                                                              \\
$268017$ $(157004)$ & —                                                                 & $\ptwo, \ptwop$                                                                        \\
$268018$ $(166668)$ & —                                                                 & $\ptwo, \ptwop$                                                                        \\
$268099$ $(156140)$ & 2-flock                                                            & $\ptwop$                                                                              \\
$268115$ $(156884)$ & 2-flock                                                            & $\ptwop$                                                                              \\
$268120$ $(115907)$ & 2-flock                                                            & —                                                                                    \\
$268272$ $(59410)$  & —                                                                 & $\pthree$                                                                                \\
$268474$ $(187780)$ & 2-flock                                                            & $\ptwop$                                                                              \\
$268475$ $(166667)$ & 2-flock                                                            & $\pthree$                                                                                \\
$268476$ $(156184)$ & —                                                                 & $\ptwo, \pthree$                                                                          \\
$268477$ $(156699)$ & —                                                                 & $\pthree$                                                                                \\
$268486$ $(156761)$ & —                                                                 & $\pthree$                                                                                \\
$268611$ $(115943)$ & 2-flock                                                            & $\ptwop$                                                                              \\
$268613$ $(115939)$ & 2-flock                                                            & $\ptwop$                                                                              \\
$268765$ $(187743)$ & 2-flock                                                            & $\ptwop$                                                                              \\
$268774$ $(187224)$ & —                                                                 & $\pthree$                                                                                \\
$268805$ $(115941)$ & 2-flock                                                            & $\ptwop$                                                                              \\
$268958$ $(187834)$ & —                                                                 & $\ptwo$                                                                               \\
$268961$ $(187835)$ & —                                                                 & $\ptwo$                                                                               \\
$269060$ $(156409)$ & —                                                                 & —                                                                                    \\
$269061$ $(59580)$  & —                                                                 & —                                                                                    \\
$269062$ $(156905)$ & 2-flock                                                            & $\ptwop, \pthree$                                                                         \\
$269550$ $(75459)$  & 2-flock  & $\ptwo, \ptwop, \pone$ \\
$269551$ $(135315)$ & 2-flock                                                            & $\ptwop, \pone$                                                                         \\
$269557$ $(139486)$ & I-M condition   & $1509, \ptwo$ \\
$269558$ $(135319)$ & 2-flock                                                            & $\ptwo, \pone$                                                                          \\
$269559$ $(164468)$ & 2-flock                                                            & $\pthree, \pone$                                                                           \\
$269704$ $(139638)$ & —                                                                 & $\ptwop$                                                                              \\
$269824$ $(74261)$  & —                                                                 & —                                                                                    \\
$269895$ $(139329)$ & 2-flock                                                            & —                                                                                    \\
$270130$ $(168724)$ & —                                                                 & —                                                                                    \\
$270133$ $(139551)$ & —                                                                 & $\pone$                                                                                \\  
& & \\ \hline & 12 circuit-hyperplanes  & \\   & & \\
$273139$ $(101274)$ & —                                                                 & —                                                                                    \\
$273141$ $(159715)$ & —                                                                 & —                                                                                    \\
$273582$ $(96423)$  & —                                                                 & —                                                                                    \\
$274066$ $(161491)$ & 2-flock                                                            & $\ptwop$                                                                              \\
$274247$ $(184043)$ & —                                                                 & $\pthree$                                                                                \\
$275082$ $(183037)$ & —                                                                 & $\pthree$                                                                                \\
$275391$ $(7757)$   & —                                                                 & $\ptwop$                                                                              \\
$275394$ $(102491)$ & —                                                                 & $\ptwop$                                                                              \\
$275398$ $(7766)$   & —                                                                 & $\ptwop$                                                                              \\
$275399$ $(95544)$  & —                                                                 & —                                                                                    \\
$275410$ $(144406)$ & —                                                                 & $\ptwop$                                                                              \\
$275411$ $(6346)$   & 2-flock                                                            & —                                                                                    \\
$275416$ $(96331)$  & —                                                                 & $\ptwop$                                                                              \\
$275417$ $(7684)$   & —                                                                 & —                                                                                    \\
$276341$ $(7517)$   & —                                                                 & —                                                                                    \\
$276430$ $(102490)$ & —                                                                 & $\ptwop$                                                                              \\
$276671$ $(144442)$ & —                                                                 & —                                                                                    \\
$276792$ $(102470)$ & —                                                                 & $\ptwop$                                                                              \\
$277240$ $(183200)$ & —                                                                 & —                                                                                    \\
$277656$ $(78353)$  & —                                                                 & —                                                                                    \\
$277673$ $(100260)$ & —                                                                 & —                                                                                    \\
$280230$ $(55342)$  & —                                                                 & $\ptwop$                                                                              \\
$280241$ $(101674)$ & —                                                                 & —                                                                                    \\
$280246$ $(16670)$  & —                                                                 & —                                                                                    \\
$280249$ $(159694)$ & —                                                                 & $\pthree$                                                                                \\
$280253$ $(161108)$ & —                                                                 & $\pthree$                                                                                \\
$280254$ $(101310)$ & —                                                                 & $\pthree$                                                                                \\
$280733$ $(7746)$   & —                                                                 & $\ptwop$                                                                              \\
$280891$ $(102402)$ & —                                                                 & —                                                                                    \\
$281004$ $(161492)$ & —                                                                 & $\ptwop$                                                                              \\
$281568$ $(96604)$  & —                                                                 & $\pone$                                                                                \\
$281572$ $(34611)$  & —                                                                 & $\pone$                                                                                \\
$281574$ $(135443)$ & I-M condition    & $1509$  \\
$281581$ $(154226)$ & —                                                                 & —                                                                                    \\
$281794$ $(154211)$ & —                                                                 & $\pone$                                                                                \\
$282270$ $(101368)$ & 2-flock                                                            & —                                                                                    \\
$282271$ $(100768)$ & —                                                                 & —                                                                                    \\
$282272$ $(160978)$ & —                                                                 & —                                                                                    \\
$283581$ $(159514)$ & —                                                                 & —                                                                                    \\
$283624$ $(175047)$ & —                                                                 & $\ptwo$                                                                               \\
$283626$ $(160605)$ & —                                                                 & —                                                                                    \\
$283630$ $(7805)$   & —                                                                 & —                                                                                    \\
$283631$ $(7795)$   & —                                                                 & —                                                                                    \\
$283632$ $(161684)$ & —                                                                 & $\ptwo$                                                                               \\ 
 & & \\ \hline & 11 circuit-hyperplanes  & \\   & & \\
$291383$ $(124364)$ & —                                                                 & —                                                                                    \\
$292609$ $(126263)$ & —                                                                 & —                                                                                    \\
$293346$ $(82241)$  & —                                                                 & —                                                                                    \\
$293347$ $(15656)$  & —                                                                 & —                                                                                    \\
$293361$ $(82200)$  & —                                                                 & —                                                                                    \\
$294990$ $(82743)$  & —                                                                 & —                                                                                    \\
$295231$ $(12377)$  & —                                                                 & —                                                                                    \\
$299715$ $(15603)$  & —                                                                 & —                                                                                    \\
$299721$ $(11521)$  & —                                                                 & $\pthree$                                                                                \\
$300609$ $(11572)$  & —                                                                 & —                                                                                    \\
$300831$ $(126188)$ & —                                                                 & —                                                                                    \\
$301018$ $(128057)$ & —                                                                 & —                                                                                    \\
$303086$ $(103366)$ & —                                                                 & —                                                                                    \\
$303094$ $(58546)$  & —                                                                 & —                                                                                    \\
$303095$ $(89912)$  & —                                                                 & —                                                                                    \\
$303158$ $(173935)$ & —                                                                 & $\ptwop$                                                                              \\
$303165$ $(173915)$ & —                                                                 & $\ptwop$                                                                              \\
$303175$ $(90881)$  & —                                                                 & —                                                                                    \\
$304062$ $(174117)$ & —                                                                 & —                                                                                    \\
$304066$ $(89377)$  & —                                                                 & —                                                                                    \\
$304067$ $(90597)$  & —                                                                 & —                                                                                    \\
$304085$ $(154928)$ & —                                                                 & $\ptwop$                                                                              \\
$306452$ $(36769)$  & —                                                                 & —                                                                                    \\
$308279$ $(124226)$ & —                                                                 & —                                                                                    \\
$308280$ $(12255)$  & —                                                                 & —                                                                                    \\
$308285$ $(10277)$  & —                                                                 & —                                                                                    \\
$308381$ $(46662)$  & —                                                                 & —                                                                                    \\
$308385$ $(37104)$  & —                                                                 & —                                                                                    \\
$308386$ $(37098)$  & —     & —   \\ 
 & & \\ \hline & 10 circuit-hyperplanes  & \\   & & \\
$319504$ $(180643)$ & —     & —         \\
$320838$ $(130755)$ & —                                                                 & —                                                                                    \\
$327043$ $(24938)$  & —                                                                 & —                                                                                    \\
$327134$ $(41018)$  & —                                                                 & —                                                                                    \\
$327157$ $(119019)$ & —                                                                 & —                                                                                    \\
$328810$ $(106019)$ & —                                                                 & —                                                                                    \\
$328817$ $(189486)$ & —                                                                 & —                                                                                    \\
$328818$ $(118546)$ & —                                                                 & —                                                                                    \\
$328917$ $(108734)$ & —                                                                 & —                                                                                    \\
$328928$ $(121151)$ & —                                                                 & —                                                                                    \\
$328941$ $(122477)$ & —                                                                 & —                                                                                    \\
$335557$ $(130851)$ & —                                                                 & —                                                                                    \\
$335558$ $(130868)$ & —   & —   \\  
& & \\ \hline & 9 circuit-hyperplanes  & \\   & & \\
$350495$ $(148386)$ & —       & —    \\
$351377$ $(62136)$  & —     & —     \\
$351471$ $(62109)$  & —      & —    \\
$\ttw$ $351483$ $(152191)$ & —  & —  \\  
& & \\ \hline & 8 circuit-hyperplanes  & \\   & & \\
$\ttt$ $365084$ $(185672)$ & —  & ---   \\* \bottomrule
\caption{List of all 181 TTT matroids}
    \label{tab:sparse pav TTT mats}\\
\end{longtable}

\subsection{Other Matroids on Nine Elements}
\label{sec:exp}



The Ingleton inequality does not provide
any relevant example of matroid on $9$ elements
that is not linearly representable.
This is due to the fact that each matroid on $9$
elements violating the Ingleton inequality has a minor on $8$ elements 
with the same property~\cite{MaRo08}. 
In this section we present additional examples of 
rank $5$ matroids on $9$ elements that,
similarly to TTT matroids, satisfy the Ingleton inequality
but do no satisfy the common information property.

Those examples have been found by an extensive search
among the matroids of rank $5$ on $9$ elements 
in the database~\cite{RoMaDatabase}.
The ones that have a minor isomorphic to 
one of the 39 matroids on eight elements that do not 
satisfy Ingleton Inequality~\cite{MaRo08} were discarded. 
This procedure is most of the times faster than 
directly checking whether the Ingleton inequality is satisfied or not.
Then we searched for matroids that are not $1$-{\gep}
by checking the existence of the modular cuts 
associated to {\gep} extensions.
Finally, the existence of {\cip} extensions of those matroids
were checked for the pairs not admitting a {\gep} extension.
We note that this approach fails for matroids of rank 4
because the ones that are not $1$-{\gep} are precisely
those violating the Ingleton inequality~\cite{BaKe88}.

One example is the matroid $201827$ with flats 
$0125$, $0268$, and $1568$ of rank 3, 
and circuit-hyperplanes $12378$, $03458$, $24578$, $01467$, $12346$, and $34678$. 
By checking modular cuts corresponding to 
non-modular pair of flats, we see that
the pair $(035,146)$ does not admit a {\gep} extension.  
Then we checked that a {\cip} extension does not exist
for the same pair.

Our search produced the list of matroids in
Table~\ref{tab:ICnonSPnonGP}.
Together with TTT matroids, those are all  
Ingleton-compliant matroids of rank $5$ on $9$ 
elements that are not $1$-{\gep}.
Moreover, they are not $1$-{\cip}.
They are non sparse paving matroids.
Checking the duals of the matroids in that list, 
we found that they break {\akp} at depth 3 and are, therefore, neither almost entropic nor algebraic.

\begin{table}[ht]
	\centering
	\begin{tabular}{|l|l|l|l|l|l|}
		\hline
		199136 & 204630 & 211985 & 221647 & 227977 & 233153 \\ \hline
		199230 & 204769 & 216857 & 221650 & 228317 & 233156 \\ \hline
		199553 & 204896 & 217478 & 221722 & 228452 & 233245 \\ \hline
		199807 & 204903 & 217597 & 221834 & 229354 & 233261 \\ \hline
		200470 & 204973 & 217772 & 221905 & 229356 & 241344 \\ \hline
		200589 & 205074 & 217846 & 221910 & 229357 & 241614 \\ \hline
		200633 & 205111 & 218082 & 222035 & 229741 & 243049 \\ \hline
		200972 & 206383 & 218124 & 222041 & 229892 & 243792 \\ \hline
		201001 & 206385 & 218129 & 222044 & 230229 & 243800 \\ \hline
		201056 & 206515 & 218179 & 222384 & 230558 & 243801 \\ \hline
		201121 & 206844 & 218341 & 222385 & 231565 & 244422 \\ \hline
		201124 & 206959 & 220346 & 222436 & 231566 & 244449 \\ \hline
		201827 & 206992 & 220524 & 223015 & 231587 & 245708 \\ \hline
		201869 & 207536 & 220657 & 223016 & 231588 & 245732 \\ \hline
		201957 & 207550 & 221002 & 223035 & 231997 & 245765 \\ \hline
		201958 & 207669 & 221046 & 223221 & 232065 & 253254 \\ \hline
		202832 & 208093 & 221323 & 223417 & 232651 & 253828 \\ \hline
		204059 & 211135 & 221541 & 227084 & 232654 &        \\ \hline
		204585 & 211841 & 221542 & 227086 & 232824 &        \\ \hline
		204624 & 211983 & 221635 & 227789 & 233072 &        \\ \hline
	\end{tabular}
\caption{Matroids of rank $5$ on $9$ elements 
that are Ingleton-Compliant but are not $1$-{\gep},
identified with the tags from~\cite{RoMaDatabase}.}
	\label{tab:ICnonSPnonGP}
\end{table}



\begin{proposition}
	The matroids listed in Table~\ref{tab:ICnonSPnonGP} are Ingleton-compliant and non-1-{\cip}, and their duals are neither 3-{\cip} nor 3-{\akp}.
\end{proposition}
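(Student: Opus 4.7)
The proof is essentially a certification of the computer-aided explorations described in Section~\ref{sec:exp}, so the plan is to document each of the three claims in a form that can be independently verified, rather than to grind through the linear programs by hand.

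First, for Ingleton-compliance, the plan is to invoke the result of Mayhew and Royle~\cite{MaRo08} stating that every matroid on nine elements violating Ingleton inequality has a minor on eight elements violating it as well. Combined with the complete list of the $39$ such eight-element matroids from the same reference, this reduces Ingleton-compliance to a minor-containment check: for each of the matroids in Table~\ref{tab:ICnonSPnonGP}, one verifies (by exhaustive minor enumeration) that none of the $39$ forbidden eight-element matroids appears as a minor. This step is routine in SageMath and was part of the initial filtering described in Section~\ref{sec:exp}.

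Second, to show that each listed matroid $M$ is not $1$-{\cip}, the plan is to exhibit, for each $M$, a specific pair $(X,Y)$ of subsets of the ground set such that no {\cip} extension exists. The paper already illustrates this for $M = 201827$ with the pair $(035,146)$, coming from the non-modular pair of flats detected through the modular-cut search. Following the same recipe, for each matroid in the table we identify a non-modular pair of flats that admits no {\gep} extension (certified by showing the associated modular cut does not exist), and then we check by linear programming that the stronger polymatroid condition defining a {\cip} extension is also infeasible on that same pair. The LP is the usual one: variables are the rank values $g(S)$ for $S \subseteq \grse Z$, constraints are monotonicity, submodularity, agreement with $f$ on subsets of $\grse$, together with $g(Z|X) = g(Z|Y) = 0$ and $g(X \lon Y | Z) = 0$; infeasibility certifies non-existence.

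Third, for the dual statements, the plan is to run iterated extension searches on $M^*$ up to depth~$3$. Concretely, for each $M$ in the table, one searches for an initial {\cip} (resp.\ {\akp}) extension of $M^*$, then for each resulting extended polymatroid one searches for a second such extension, and so on, constructing the tree of admissible depth-$3$ extensions. The claim is that for every branch of this tree the final extension step fails, which is certified by an LP infeasibility at each leaf. Since being $3$-{\cip} (resp.\ $3$-{\akp}) requires such a depth-$3$ chain to exist for \emph{every} pair of subsets, exhibiting a single pair for which the tree has no complete branch establishes the negative result. Here the symmetry between {\cip} and {\akp} extensions (Propositions~\ref{st:civsak} and~\ref{st:IMvsAK}) allows us to reuse much of the linear-programming machinery.

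The hard part is not the mathematics but the scale and trustworthiness of the computation: the LPs at depth $3$ on a polymatroid with $9+3 = 12$ ground elements have $2^{12}$ rank variables and grow combinatorially in the number of branches to be eliminated. The proposal is therefore to present the statement as a verified computational result, with the code archived in~\cite{Bam22}, and to include in the main text one fully worked example (as is already done with $M = 201827$ and the pair $(035,146)$) so that a reader can reproduce the pattern and, if desired, re-run the exhaustive verification from the provided scripts.
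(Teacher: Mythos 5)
Your proposal matches the paper's own justification, which is precisely the computational pipeline of Section~\ref{sec:exp}: Ingleton-compliance via the Mayhew--Royle minor criterion against the $39$ forbidden eight-element matroids, non-$1$-{\cip} certified by LP infeasibility on a pair found through the modular-cut search for failed {\gep} extensions, and the dual statements by iterated extensions up to depth~$3$ (the paper checks {\akp} on the duals and the {\cip} claim follows since {\cip} extensions are {\akp} extensions). One caution: the depth-$3$ step should be phrased as a single combined LP over a fixed sequence of pairs whose infeasibility refutes the iterated property, rather than a tree branching over ``each resulting extended polymatroid,'' since the admissible extensions form a continuum and cannot be enumerated.
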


In terms of application to secret sharing, 
we also found bounds on the information ratio 
of linear secret sharing schemes for the ports 
of TTT matroids and matroids in this family. 
In the case of TTT matroids, the bound on the 
information ratio for linear secret sharing schemes is $6/5$. 
For some ports of $\ttt$, this is the exact value~\cite{BBFP20}. 
However, for the ports of the matroids in Table~\ref{tab:ICnonSPnonGP}, 
the behavior is less uniform. 
While the same bound $6/5$ applies in most of the cases,
for some of them the bound is $7/5$~\cite[Appendix B.3]{Bam21}. 


Finally, we present a third family of new 
non-linearly representable matroids. 
We found these matroids by exploring 
the list of 12,129 matroids
of rank $5$ on $9$ elements 
for which, according to~\cite{Bollen18},
the existence of Frobenius flock representations
in characteristic $2$ remains unsolved.
The matroids in Table~\ref{tab:fam2matroids} 
are 1-{\gep} but not 2-{\gep}, 
so they are not linearly representable. 
Then we checked the {\cip} property, 
and we found that they are not $2$-{\cip}. 
Therefore, matroids in this table are not folded-linear. 
The sets in the first column of the table are the 
combinations we found to cause the matroids 
in the columns to the right to be non-2-{\cip}. 
That is not an exhaustive list, 
there may be other matroids of rank $5$
on $9$ elements that are 1-{\gep} but not 2-{\gep}. 
Finally, we note that the duals of all those matroids 
have been shown to be non-algebraic 
due to failing Ingleton-Main at various depths~\cite{Bollen18,Bollen18-1}.

\begin{table}[ht]
\begin{center}
\begin{tabular}{|c|c|c|c|c|c|}
\hline
\begin{tabular}[c]{@{}c@{}}Sets\\ ($U_1,V_1$) \& ($U_2,V_2$)\end{tabular} & \multicolumn{5}{c|}{Matroids} \\ \hline
\multirow{11}{*}{(03, 127) \& (36, 087)} & 6182   & 89044  & 135432 & 156366 & 159246 \\
                                         & 6184   & 89045  & 103147 & 159112 & 164382 \\
                                         & 6206   & 89046  & 136699 & 159119 & 171945 \\
                                         & 6207   & 95441  & 140204 & 159184 & 171946 \\
                                         & 7493   & 95446  & 145710 & 159185 & 171954 \\
                                         & 8369   & 100735 & 146367 & 159202 & 171967 \\
                                         & 58384  & 100736 & 146368 & 159236 & 172039 \\
                                         & 59382  & 100755 & 147269 & 159241 & 172042 \\
                                         & 88940  & 100947 & 154115 & 159242 & 187245 \\
                                         & 88967  & 100983 & 156059 & 159245 & 187929 \\
                                         & 89019  & 100988 & 156062 &        &        \\ \hline
(57, 280) \& (05, 148)                   & 7848   & 7849   & 144432 &        &        \\ \hline
\multirow{10}{*}{(03, 128) \& (36, 087)} & 8080   & 15183  & 42484  & 100325 & 127743 \\
                                         & 10631  & 15189  & 42525  & 104169 & 146946 \\
                                         & 15107  & 26185  & 42526  & 106805 & 158280 \\
                                         & 15108  & 26186  & 77860  & 111823 & 174258 \\
                                         & 15115  & 26224  & 83155  & 111839 & 183860 \\
                                         & 15128  & 26227  & 83156  & 111871 & 183866 \\
                                         & 15129  & 37250  & 91265  & 127707 & 183877 \\
                                         & 15130  & 37253  & 96490  & 127710 & 190074 \\
                                         & 15140  & 37255  & 100320 & 127720 & 190075 \\
                                         & 15177  & 42465  & 100321 & 127742 &        \\ \hline
\multirow{2}{*}{(46, 018) \& (04, 135)}  & 8088   & 26584  & 83600  & 96491  & 184061 \\
                                         & 18212  &        &        &        &        \\ \hline
(01, 237) \& (15, 078)                   & 46632  & 55425  & 146335 &        &        \\ \hline
$(04, 128)$ \& $(02, 136)$                 & 156183 &        &        &        &        \\ \hline
\end{tabular}
\caption{Another list of 
non-{\cip} matroids,
identified with the tags from~\cite{Bollen18-1}.}
\label{tab:fam2matroids}
\end{center}
\end{table}
}

\end{document}